\documentclass[reqno,12pt]{amsart}

\usepackage{amssymb}
\usepackage{amsthm}

\usepackage{enumitem}

\newtheorem{theorem}{Theorem}[section]
\newtheorem{lemma}[theorem]{Lemma}

\theoremstyle{definition}

\thanks {}

\numberwithin{equation}{section}

\begin{document}

\title[Hyperbolic-type metrics in space]{Hyperbolic-type metrics in space}

\author[Aimo Hinkkanen, Poranee Khayo]{Aimo Hinkkanen$^{1}, $Poranee Khayo$^{2}$}

\address{$^{1}$ Department of Mathematics, University of Illinois Urbana--Champaign,  Urbana, IL, 61801 U.S.A.}

\address{$^{2}$ Department of Mathematics, Physics and Computer Science, University of Cincinnati Blue Ash College,  9555 Plainfield Road, 
Blue Ash, Ohio, 45236 U.S.A.}

\thanks{}

\subjclass[2000]{Primary 30C62.} 

\abstract 
We define metrics in space that are natural counterparts of the hyperbolic metric in plane domains, using the characterization of the hyperbolic metric due to Beardon and Pommerenke. We obtain inequalities for these metrics under quasiconformal homeomorphisms between domains in space that have at least two boundary points. We discuss the failure of the existence of such estimates for non-homeomorphic  quasiregular mappings.
\endabstract

\maketitle

\section{Introduction} \label{s1}

The hyperbolic metric of plane domains whose complement contains at least two points has been used extensively in the theory of analytic functions and quasiconformal mappings in the plane. A metric called the quasihyperbolic metric has been defined in domains in both the plane and space, and it has been used in the theory of quasiconformal and quasiregular mappings in both settings. However, the quasihyperbolic metric does not capture all the properties of the (plane) hyperbolic metric except in simply connected plane domains. 

In this paper we define hyperbolic-type metrics in space and show how they can be used in the theory of quasiconformal mappings. In the plane, our definition is a natural consequence of the results of Beardon and Pommerenke \cite{BP78}.
But that metric fails to be monotone with respect to domain (a property of the hyperbolic metric), and so we consider three variants of the metric and show that they are comparable within a multiplicative positive absolute constant and hence could often be used interchangably according to whatever is most convenient for the situation. One of the three is monotone with respect to domain. Aspects of the results of Beardon and Pommerenke in plane domains have been previously developed further at least by Herron, Ma, and Minda \cite{HMM08} and particularly by Herron and Lindquist \cite{HL21}. 

The plan of the paper is as follows. In Section~\ref{s2} we define the three metrics and state their principal properties, which will be proved in Sections~\ref{s4}--\ref{pf2}. In Section~\ref{s3} we  state our results for quasiconformal mappings obtained by means of these metrics and discuss the situation for  quasiregular mappings. 
In Sections~\ref{s5} and \ref{s6} we prove the results stated in Section~\ref{s3}.

{\bf Notations.} 
We deal with ${\mathbb R}^n$, where $n\geq 2$.

If $x\in {\mathbb R}^n$ and $0<r_1<r_2$, we write
\begin{equation} \label{ann}
A(x,r_1,r_2) = \{ y\in {\mathbb R}^n : r_1<|y-x|<r_2 \}
\end{equation}
for an annulus (if $n=2$) or a spherical ring domain (if $n\geq 3$).
We also write, for $x\in {\mathbb R}^n$ and $r>0$, 
$$
B^n(x,r)=\{y\in {\mathbb R}^n : |y-x|<r \},\quad 
\overline{B}^n(x,r)=\{y\in {\mathbb R}^n : |y-x|\leq r \},
$$
$$
S^{n-1}(x,r)=\{y\in {\mathbb R}^n : |y-x|=r \} .
$$
The $n-$dimensional Lebesgue measure of $B^n(x,r)$ is denoted by $|B^n(x,r)|$. The phrase almost everywhere refers to the Lebesgue measure. Integration with respect to the Lebesgue measure is denoted by $dm(x)$. 
If $x=(x_1,\dots , x_n)\in {\mathbb R}^n$, we write
$$
|x| = \sqrt{ x_1^2 + \cdots + x_n^2} .
$$
We denote the standard basis of ${\mathbb R}^n$ by $\{ {\bf e}_1, \dots , {\bf e}_n \}$. For $x=(x_1,\dots , x_n)\in {\mathbb R}^n$ and $y=(y_1,\dots , y_n)\in {\mathbb R}^n$, we write
$$
x\cdot y = \sum_{j=1}^n x_jy_j  .
$$

For the proof of Lemma~\ref{lem2}, we note that there is a number 
\begin{equation} \label{t00}
t_0\approx  1.14619 
\end{equation} 
such that $e^{t_0}=2+t_0$, which we will need in the form
$$
e^{t_0}-1 = t_0+1 .
$$ 
For $0\leq t<t_0$, we have
$e^t<2+t$. For $t>t_0$, we have $e^t>2+t$.  These numerical calculations have been performed by Wolfram's  software package  {\sl Mathematica}.

\section{The three metrics} \label{s2}

Suppose that $n\geq 2$, and let $D$ be a domain in the Euclidean space ${\mathbb R}^n$ such that ${\mathbb R}^n \setminus D$ contains at least two points. For $z\in D$, define
$$
d(z,\partial D) = \inf \{ |z-a| : a\in \partial D\} .
$$
There is at least one point $a\in \partial D$ such that $|z-a| = d(z,\partial D)$.

For each $z\in D$, we define three densities of metrics, denoted by $\lambda_D(z)$, $\lambda'_D(z)$,  and $\lambda''_D(z)$, as follows.

We set
\begin{equation} \label{met1}
 \frac{ 1}{\lambda_D(z) } = \inf\left\{  |z-a| \left(  1 + \left| \log \frac{  |a-b|    } {  |z-a|   }  \right|  \right)  : a,b\in {\mathbb R}^n \setminus D      \right\} ,
\end{equation}
\begin{equation} \label{met2}
 \frac{ 1}{\lambda'_D(z) } = \inf\left\{  |z-a| \left(  1 + \left| \log \frac{  |a-b|    } {  |z-a|   }  \right|  \right)  : a,b\in \partial D     \right\} ,
\end{equation}
and
\begin{equation} \label{met3}
 \frac{ 1}{\lambda''_D(z) } = \inf\left\{  |z-a| \left(  1 + \left| \log \frac{  |a-b|    } {  |z-a|   }  \right|  \right)  : a,b\in \partial D, \,\, |z-a|= d(z,\partial D)      \right\} .
\end{equation}

Since ${\mathbb R}^n \setminus D$, and hence also $\partial D$, contains at least two points, it is clear in each case that the density is a finite positive number.

We see that the main idea is to consider the infimum of the numbers 
$$
 |z-a| \left(  1 + \left| \log \frac{  |a-b|    } {  |z-a|   }  \right|  \right) 
$$
where $a$ and $b$ are outside $D$. The only question is exactly to which sets $a$ and $b$ are limited. There are other variations than the three mentioned above. For example, we could choose $b\in {\mathbb R}^n \setminus D$ and $a\in \partial D$, with or without the condition that $|z-a|= d(z,\partial D)$.

One possible motivation for considering such metrics is the following result due to Beardon and Pommerenke from 1978 (\cite{BP78}, Theorem~1, p.~477). We denote the density of the hyperbolic metric of a plane domain $D$ at $z\in D$ by $\eta_D(z)$ and normalize it so that when $D$ is the unit disk ${\mathbb D} = \{ z\in {\mathbb C} \colon |z|<1 \}$ in the complex plane ${\mathbb C}$, which we identify with ${\mathbb R}^2$, then $\eta_{ {\mathbb D} }(z) = 1/(1-|z|^2)$ for all $z\in {\mathbb D}$. 

\noindent {\bf Theorem A.} {\sl  Let $D$ be a plane domain with a hyperbolic metric with density $\eta_D(z)$. Write $k=4+\log (3+2\sqrt{2})\approx 5.7627...$. For all $z\in D$, define
\begin{equation} \label{beta1}
\beta(z) = \inf\left\{   \left| \log \frac{  |a-b|    } {  |z-a|   }  \right|    : a,b\in \partial D, \,\, |z-a|= d(z,\partial D)      \right\}  .
\end{equation}
Then for every $z\in D$, we have
\begin{equation} \label{hyp1}
\frac{1}{2 \sqrt{2}  } - k\cdot \eta_D(z) \cdot d(z,\partial D) \leq
\eta_D(z) \cdot d(z,\partial D) \cdot \beta(z) \leq k+ \frac{\pi}{4}
\end{equation}
and also
\begin{equation} \label{hyp2}
\frac{1}{2\sqrt{2}}  \leq
\eta_D(z) \cdot d(z,\partial D) \cdot (\beta(z) + k) \leq k+ \frac{\pi}{4}  .
\end{equation}
}

If $D$ is simply connected, then $\beta(z)=0$ for all $z\in D$. The above result shows that in a plane domain, if we do not care about the exact values of the constants, the quantity $1/\eta_D(z)$ is comparable to 
\hfil\break
$d(z,\partial D) \cdot (\beta(z) + k)$ and hence also to $d(z,\partial D) \cdot (\beta(z) + 1)$. This last quantity is equal to $1/\lambda''_D(z)$. These observations served as our starting point for the investigations in this paper.

In domains in space, a hyperbolic metric has been defined only in balls, exteriors of balls, and half spaces. We are thus defining what is a hyperbolic-type metric in general domains in space. 

To continue with our study of these metrics in space, we first establish some simple properties of their densities. We assume throughout that $D$ is a domain in  ${\mathbb R}^n$, where $n\geq 2$, such that ${\mathbb R}^n \setminus D$ contains at least two points. 

\begin{lemma} \label{lem1}
Each infimum in (\ref {met1}), (\ref {met2}), and (\ref {met3})  is attained as a minimum. Each of $\lambda_D(z)$, $\lambda'_D(z)$, and $\lambda''_D(z)$ is a continuous function of $z$ for $z\in D$.
\end{lemma}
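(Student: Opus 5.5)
Write $F(z,a,b)=|z-a|\bigl(1+|\log(|a-b|/|z-a|)|\bigr)$ for $z\in D$ and $a,b$ in the complement with $a\neq b$; if $a=b$ the quantity is $+\infty$, so such pairs can be discarded. For attainment we use compactness. Fix $z\in D$ and a pair $(a_0,b_0)$ of distinct admissible points, and set $M=F(z,a_0,b_0)<\infty$. Any pair with $F(z,a,b)\le M$ satisfies two bounds. First, $d(z,\partial D)\le |z-a|\le M$, since $a\notin D$ forces $|z-a|\ge d(z,\partial D)>0$. Second, $|\log(|a-b|/|z-a|)|\le M/d(z,\partial D)$, so $|a-b|$ lies between two positive constants $c_1$ and $c_2$ that depend only on $z$ and $M$. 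Hence the competing pairs lie in the set
\[
\{(a,b): d(z,\partial D)\le |z-a|\le M,\ c_1\le |a-b|\le c_2\}
\]
intersected with $E\times E$. Here $E$ is ${\mathbb R}^n\setminus D$ for (\ref{met1}), $\partial D$ for (\ref{met2}), and $\partial D\cap S^{n-1}(z,d(z,\partial D))$ for (\ref{met3}). Each such $E$ is closed. On this compact set $F(z,\cdot,\cdot)$ is continuous because $|a-b|$ stays bounded away from $0$. So the minimum over the compact set is attained, and it equals the infimum because pairs outside the set give values exceeding $M$.

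For continuity of $\lambda_D$ and $\lambda'_D$ it suffices to show that $z\mapsto 1/\lambda(z)$ is continuous, since it is positive. Upper semicontinuity is immediate. Fix an optimal pair $(a,b)$ for $z_0$; for the \emph{same} pair, $z\mapsto F(z,a,b)$ is continuous, so $\limsup_{z\to z_0}1/\lambda(z)\le F(z_0,a,b)=1/\lambda(z_0)$. For lower semicontinuity, take $z_k\to z_0$ with minimizing pairs $(a_k,b_k)$. The bounds above are uniform for $z_k$ near $z_0$, because $d(z_k,\partial D)\ge d(z_0,\partial D)/2$ and $1/\lambda(z_k)$ is bounded by upper semicontinuity. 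Passing to a subsequence, $(a_k,b_k)\to(a,b)$ with $a,b$ in the closed set $E$ and $a\ne b$. Continuity of $F$ in all three variables then gives $\liminf_k 1/\lambda(z_k)\ge F(z_0,a,b)\ge 1/\lambda(z_0)$.

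The main obstacle is $\lambda''_D$, because its admissible set of $a$ (nearest boundary points) depends on $z$, and this set is only upper semicontinuous. The lower semicontinuity argument still works, since limits of nearest points of $z_k$ are nearest points of $z_0$. The upper bound fails as stated, however: a nearest point $a$ of $z_0$ need not be nearest for nearby $z$. I would try to exploit that $a$ varies: for $z$ near $z_0$, pick a nearest point $a_z$ of $z$ and compare $F(z,a_z,b)$ with $F(z_0,a,b)$. This needs $|z-a_z|\to|z_0-a|$, which holds, and $|a_z-b|\to|a-b|$, which requires $a_z\to a$. That can fail when $z_0$ has several nearest points; for example, on the medial axis a nearby $z$ may see only a different nearest point $a'$ with a worse $|\log|$ term. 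So I would check carefully whether continuity of $\lambda''_D$ genuinely holds. If a counterexample exists (say $D$ the complement of two points $p,q$ and a third point, placed so that the nearest point jumps), the statement would need modification. Otherwise the argument must show that the minimum over all nearest points is continuous despite this jump, and this is the step I would scrutinize first, testing it on small finite complements.
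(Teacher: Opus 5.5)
Your compactness argument for attainment and your upper/lower semicontinuity argument for $\lambda_D$ and $\lambda'_D$ are correct. In substance the paper does the same thing: minimizing sequences, boundedness, and closedness of $\partial D$ or ${\mathbb R}^n\setminus D$. However, its continuity proof only passes to a limit pair $(a,b)$ of minimizers for $z_j$ and then asserts $F(z,a,b)=1/\lambda'_D(z)$. That equality needs exactly your upper semicontinuity step (fix a minimizing pair for $z_0$ and vary $z$), so your version is the more complete one. There is one slip. For (\ref{met3}) the competing pairs lie in $\bigl(\partial D\cap S^{n-1}(z,d(z,\partial D))\bigr)\times\partial D$, not $E\times E$, because only $a$ is restricted to the sphere.

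Your doubt about $\lambda''_D$ is justified: the continuity claim for $\lambda''_D$ is false, and the three-point test you suggest settles it. Take $D={\mathbb R}^n\setminus\{{\bf e}_1,-{\bf e}_1,{\bf e}_1+{\bf e}_2\}$.

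\begin{itemize}
\item At $z=0$ both $\pm{\bf e}_1$ are nearest boundary points. The choice $a={\bf e}_1$, $b={\bf e}_1+{\bf e}_2$ gives $|a-b|=|z-a|=1$, so $1/\lambda''_D(0)=1$.
\item For $z=-t{\bf e}_1$ with $0<t<1$, the only nearest point is $-{\bf e}_1$, at distance $1-t$. The other boundary points are at distances $2$ and $\sqrt5$ from it, so
\[
\frac{1}{\lambda''_D(-t{\bf e}_1)}=(1-t)\Bigl(1+\log\frac{2}{1-t}\Bigr)\to 1+\log 2 \qquad (t\to 0^+).
\]
\end{itemize}

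Hence $\lambda''_D(-t{\bf e}_1)\to 1/(1+\log 2)<1=\lambda''_D(0)$. The paper handles $\lambda''_D$ by saying the argument for $\lambda'_D$ carries over after changing the set. That ignores precisely the $z$-dependence of the admissible $a$ that you identified.

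What is true is the following. First, $\lambda''_D$ is upper semicontinuous, which your lower semicontinuity argument for $1/\lambda''_D$ gives, since limits of nearest points are nearest points. Second, $\lambda''_D$ is continuous at every $z$ with a unique nearest boundary point, because then nearest points of nearby $z$ converge to it. This suffices for defining $d''_D$ and for Lemma~\ref{le6}.

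So the only thing your proposal lacks is to carry out the check you proposed and conclude that this part of the statement must be weakened.
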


\begin{lemma} \label{lem2}
There is a positive absolute constant $C_0$ such that whenever $n\geq 2$, $D$ is a domain in ${\mathbb R}^n$ such that ${\mathbb R}^n \setminus D$ contains at least two points, and $z\in D$,  we have
\begin{equation} \label{comp1}
 \lambda''_D(z) \leq \lambda'_D(z) \leq \lambda_D(z) \leq C_0 \lambda''_D(z) 
\end{equation} 
and
\begin{equation} \label{comp2}
 \lambda_D(z) \leq  \frac{1}{ d(z,\partial D) }  .
\end{equation} 
\end{lemma}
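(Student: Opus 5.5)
The first two inequalities in (\ref{comp1}) follow from inclusion of the index sets. The admissible pairs $(a,b)$ in (\ref{met3}) form a subset of those in (\ref{met2}), which in turn form a subset of those in (\ref{met1}), because $\partial D\subset {\mathbb R}^n\setminus D$. Taking infima over larger sets gives smaller values, so $1/\lambda_D\le 1/\lambda'_D\le 1/\lambda''_D$. For (\ref{comp2}), we choose $a\in\partial D$ with $|z-a|=d(z,\partial D)$. Since $\partial D$ has at least two points, we may then pick $b$ with $|a-b|$ close to $|z-a|$, or simply let $b$ range over the complement and note the term is at least $|z-a|$. This is the wrong direction, however. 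What we actually want is $1/\lambda_D(z)\ge d(z,\partial D)$. This holds because every admissible $a\in{\mathbb R}^n\setminus D$ satisfies $|z-a|\ge d(z,\partial D)$ (the segment from $z$ to $a$ meets $\partial D$), and the bracket is at least $1$.

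The substance is $\lambda_D\le C_0\lambda''_D$, i.e.\ $1/\lambda''_D(z)\le C_0\,|z-a|(1+|\log(|a-b|/|z-a|)|)$ for every $a,b\in{\mathbb R}^n\setminus D$. Let $d=d(z,\partial D)$, fix $a_0\in\partial D$ with $|z-a_0|=d$, and write $r=|z-a|\ge d$ and $s=|a-b|$. The goal is to find $b'\in\partial D$ with
\[
d\,(1+|\log(|a_0-b'|/d)|)\le C_0\, r\,(1+|\log(s/r)|).
\]
The key tool is that any nondegenerate continuum joining two points of $D$-complement must cross $\partial D$. The ball $\overline B^n(z,d)$ lies in $\overline D$, and the complement contains $a,b$ and $a_0$. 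If a sphere centered at $a_0$ separates $a_0$ from some complement point, then $\partial D$ meets that sphere. (A sphere $S^{n-1}(a_0,t)$ intersecting $D$ and containing or separating complement points must contain a boundary point, since $D$ is connected and meets both sides.) More precisely, for $t>0$ with $D\cap S^{n-1}(a_0,t)\ne\emptyset$, either $S^{n-1}(a_0,t)\subset D$ or it contains a point of $\partial D$. The first alternative cannot hold for $t$ between $0$ and $\max(|a_0-a|,|a_0-b|)$ whenever complement points lie on both sides, because $D$ is connected and would then be... Here care is needed. I will instead work directly: let $R=\sup\{|a_0-w|: w\in{\mathbb R}^n\setminus D\}$, possibly infinite, using at least one other complement point. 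For $0<t<R$ with $t\le$ roughly $2d$, the sphere $S^{n-1}(a_0,t)$ meets $B^n(z,d)\subset D$. Hence if it is not contained in $D$, it contains a boundary point.

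Case analysis. \emph{Case 1: $s\ge r$ or $s$ comparable to $r$.} The complement point $b$ (or $a$) lies at distance $\rho\le r+s+d$ from $a_0$. Consider $t_1=\sup\{t\le\rho: S^{n-1}(a_0,t)\subset\overline{D}\text{ fails}\}$. Choosing the smallest radius $t\ge d$ at which the sphere about $a_0$ hits the complement gives a boundary point $b'$ with $|a_0-b'|\le \max(\rho,\cdot)$. If $|a_0-b'|\ge d$, then $\log(|a_0-b'|/d)\le\log((r+s+2r)/d)$. The danger is that $\log(r/d)$ is not controlled by $r(1+|\log(s/r)|)/d$, but it is, since $d\log(r/d)\le r$. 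Also $d\log(1+s/r)\lesssim r(1+\log^+(s/r))$. \emph{Case 2: $|a_0-b'|<d$ possible.} We need a lower bound $|a_0-b'|\ge c\,d\,e^{-C r(1+|\log(s/r)|)/d}$. For this I will pick $b'$ as a boundary point on the segment from $a_0$ toward the farther of $a,b$, or as the nearest boundary point other than $a_0$ along suitable spheres. The term $|\log(|a_0-b'|/d)|$ for small $|a_0-b'|$ is the main obstacle. The subcase $s\ll r$ (so $a,b$ are close together and far away) is handled by noting that spheres about $a_0$ of radius between $d$ and $|a_0-a|$ all meet both $D$ (near $z$) and are bounded by complement. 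Connectivity along the path from $a_0$ to $a$ then yields a boundary point at distance $\ge d$ and $\le r+d$. This gives bound $d(1+\log((r+d)/d))\le C r$. So in fact we can always take $|a_0-b'|\in[d,\,r+s+d]$ whenever some complement point lies outside $B^n(a_0,d)$. The only remaining configuration is when all complement points lie within $B^n(a_0,d)$. Then $a,b$ are both close to $a_0$, $r\le 2d$, and we take $b'=b$-ish with $|a_0-b'|\ge$ something like $|a-b|-|a-a_0|$. I expect this last configuration, where one must relate $\log(s/r)$ to $\log(|a_0-b'|/d)$ when $a\ne a_0$, to be the main difficulty. The constant $t_0$ from the introduction will presumably enter here through optimizing $x(1+\log)$-type expressions. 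I will try first to choose $b'$ as a boundary point of $D$ on the sphere $S^{n-1}(a_0,|a_0-b|)$ or $S^{n-1}(a_0,|a_0-a|)$, whichever is larger, and show that radius is $\gtrsim s$.
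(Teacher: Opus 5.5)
Your treatment of the first two inequalities in (\ref{comp1}) and of (\ref{comp2}) is fine. Your strategy for $\lambda_D\le C_0\lambda''_D$ is also sound: given any admissible $a,b\in{\mathbb R}^n\setminus D$, you produce a competitor $(a_0,b')$ for (\ref{met3}) with $b'\in\partial D$ on a sphere centred at a nearest boundary point $a_0$. But the proposal stops before the proof is done.

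\textbf{The open case.} The configuration $a,b\in B^n(a_0,d)$ is explicitly left open. Note that this, not ``all complement points lie in $B^n(a_0,d)$'', is what remains after your Case~1. The choice you first suggest there fails. The quantity $|a-b|-|a-a_0|$ can be $\le 0$. For $|b-a_0|$ tiny and $|a-a_0|=d/2$, taking $b'=b$ makes $|\log(|a_0-b'|/d)|$ arbitrarily large while $|\log(s/r)|$ stays bounded.

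\textbf{False intermediate claims, to be deleted.} A continuum joining two complement points need not meet $\partial D$. A sphere about $a_0$ that merely \emph{separates} complement points need not meet $\partial D$, and $S^{n-1}(a_0,t)\subset D$ does occur: when $\beta>0$ the whole annulus $A(a,e^{-\beta}d,e^{\beta}d)$ about the nearest point lies in $D$, which is exactly the observation (\ref{a11}) on which the paper's proof rests. Finally, spheres about $a_0$ of radius $\ge 2d$ miss $B^n(z,d)$, so ``they all meet $D$ near $z$'' does not justify your first-hitting-radius step.

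\textbf{How your closing idea fills the gap.} For $0<t<2d$ the point $a_0+t(z-a_0)/d$ lies in $B^n(z,d)\subset D$. Hence the connected set $S^{n-1}(a_0,t)$ meets $\partial D$ whenever it meets ${\mathbb R}^n\setminus D$. Let $\rho=\max\{|a_0-a|,|a_0-b|\}$, so $\rho\ge s/2$.

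If $\rho<d$, this gives $b'\in\partial D$ with $|a_0-b'|=\rho$. Then $d(1+\log(d/\rho))\le d(1+\log 2+\log^+(d/s))\le(1+\log 2)\,r(1+|\log(s/r)|)$, since $\log^+(d/s)>0$ forces $s<d\le r$.

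If $\rho\ge d$, let $t^*\in[d,\rho]$ be the least $t\ge d$ for which $S^{n-1}(a_0,t)$ meets ${\mathbb R}^n\setminus D$; it exists because the complement is closed. If $t^*=d$, this sphere passes through $z$, so by connectedness it meets $\partial D$. If $t^*>d$, a complement point on it is the limit of the radial points at distances in $[d,t^*)$, which lie in $D$, so that point is in $\partial D$. Either way you get $b'\in\partial D$ with $d\le|a_0-b'|\le\rho\le d+r+s$. With $u=r/d\ge 1$ and $v=s/r$ one has $1+\log(1+u+uv)\le 1+u+\log 2+\log^+v\le(2+\log 2)\,u(1+|\log v|)$.

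Taking the infimum over $(a,b)$ gives (\ref{comp1}) with $C_0=2+\log 2$.

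\textbf{Comparison with the paper.} This is a genuinely different route. The paper fixes a minimizing pair $(a',b')$ for $\lambda_D$ (via Lemmas~\ref{lem1} and \ref{le7}). When $\beta\ge t_0$ and $d'<d(1+\beta)$, it uses the empty annulus $A(a,e^{-\beta}d,e^{\beta}d)$ to trap $a'$ in $\overline{B}^n(a,e^{-\beta}d)$. It then treats $b'$ in two subcases with Lemma~\ref{le1}, obtaining $C_0=1+t_0<2.15$. Your version needs neither minimizers nor $t_0$, at the price of a somewhat larger constant.
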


Strict inequalities may hold in (\ref{comp1}). 

\begin{lemma} \label{le5}
For each $n\geq 2$, there is a domain $D$  in ${\mathbb R}^n$ such that ${\mathbb R}^n \setminus D$ contains at least two points and such that there exists $z\in D$ for which
\begin{equation} \label{comp3a}
 \lambda''_D(z) < \lambda'_D(z) < \lambda_D(z) .
\end{equation}
\end{lemma}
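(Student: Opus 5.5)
We will prove Lemma~\ref{le5} by an explicit construction with $z=0$. The complement ${\mathbb R}^n\setminus D$ will be the union of a few pieces, chosen so that the three minimizing configurations in (\ref{met3}), (\ref{met2}) and (\ref{met1}) are forced to be different. Write
$$
f(a,b)=|a|\Bigl(1+\Bigl|\log\frac{|a-b|}{|a|}\Bigr|\Bigr).
$$
Two elementary facts drive everything. First, $f(a,b)\ge |a|$, with equality exactly when $|a-b|=|a|$. Second, $f(a,b)\ge 1+|\log|a-b||$ whenever $|a|=1$. By Lemma~\ref{lem1} all three infima are minima, so strict inequalities between the densities amount to strict inequalities between the corresponding minima of $f$.

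\emph{Step 1: separating $\lambda''$ and $\lambda'$.} Take a point $a_0={\bf e}_1$ as the unique nearest boundary point, so $d(0,\partial D)=1$. Make every other boundary point lie at a distance from $a_0$ whose logarithm is large in absolute value. We put an auxiliary point $b_0=(1+\varepsilon){\bf e}_1$ with $\varepsilon$ tiny, and all remaining pieces at distance $\ge 2.9$ from $a_0$. We also include the exterior $\{|x|\ge R\}$ with $R$ huge, which makes $D$ bounded; alternatively it can simply be omitted. Then
$$
1/\lambda''_D(0)=\min\{1+\log(1/\varepsilon),\,1+\log(\text{distances}\ge 2.9),\dots\}>2.05 .
$$
Next we place a boundary configuration at distance about $2$ from the origin realizing $f=2$, for instance two points $a_1=-2{\bf e}_1$ and $a_2=-2{\bf e}_1-2{\bf e}_2$. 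This gives $1/\lambda'_D(0)\le 2<1/\lambda''_D(0)$. We check $b_0$ separately: $f(b_0,\cdot)\ge f(b_0,a_0)$, which is large, and any other choice of $b$ has $|\log|$ at least about $\log 2.9$.

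\emph{Step 2: separating $\lambda'$ and $\lambda$.} Here we modify the piece near $-2{\bf e}_1$. We replace $a_1$ by a small closed ball $\overline{B}^n(c,\delta)$ with $c=-2{\bf e}_1$ (or a slightly different shape) and keep a partner point $a_2$ with $|a_2-c|=|c|=2$. Then the interior point $a=c$ together with $b=a_2$ gives $f=2$, so $1/\lambda_D(0)\le 2$. It remains to show that every pair of boundary points gives $f>2+\eta$ for some $\eta>0$, except that pairs using the ball should give $f\ge 2-\delta$ only in a controlled way; we shall arrange the geometry so that this actually exceeds $2$.

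Concretely, we will choose the direction of $a_2$ relative to $c$ and the shape or size of the removed piece so that, for boundary $a$ near $c$ with $|a|<2$, the ratio $|a-a_2|/|a|$ stays bounded away from $1$ by enough that $|a|(1+|\log(\cdot)|)>2$. If a round ball makes this borderline (it does to first order in $\delta$, since $|a|=2-\delta$ while $|\log|$ is only of order $\delta$), we will instead use a thin closed segment or cap through $c$. Such a set has $c$ in its interior, and its boundary near the origin-facing side is placed at distances from $a_2$ that are clearly different from the corresponding $|a|$. The remaining pairs, such as $(a_2,\cdot)$, $(a_0,\cdot)$, $b_0$, and the exterior sphere, are handled by the crude bounds of Step 1. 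Continuity (Lemma~\ref{lem1}) together with compactness of the relevant finite configuration then gives
$$
1/\lambda''_D(0)>1/\lambda'_D(0)>1/\lambda_D(0),
$$
which is (\ref{comp3a}).

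The main obstacle is Step 2. Boundary points of the complement lie arbitrarily close to its interior points, so $1/\lambda'$ and $1/\lambda$ tend to differ only by a first-order amount, and one must arrange the geometry so that the first-order term has the right sign. I will first try the explicit ball with $a_2$ placed at angle $\pi/2$ from $-{\bf e}_1$ about $c$ and compute $\min f$ over the sphere $S^{n-1}(c,\delta)$ to first order in $\delta$. If that fails, I will pass to a non-round piece chosen to make the sign favourable.
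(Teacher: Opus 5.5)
\textbf{Step~1 is fine, but Step~2 has a genuine gap.} You never exhibit a complement for which $\lambda_D(0)>\lambda'_D(0)$. Worse, the configuration you plan to try cannot give one, whatever the shape of the piece around $c=-2{\bf e}_1$.

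With $a_2=-2{\bf e}_1-2{\bf e}_2$, the bisecting hyperplane $\{x:|x|=|x-a_2|\}$ passes through $c$ but is not orthogonal to $c$. So it contains points $x$ of the piece with $|x|<2$, and for these $f(x,a_2)=|x|<2$. For the round ball this already happens on $\partial D$: the point $a=c+\delta({\bf e}_1-{\bf e}_2)/\sqrt2$ satisfies $|a|=|a-a_2|=\sqrt{4-2\sqrt2\,\delta+\delta^2}<2$. Hence $1/\lambda'_D(0)<2=f(c,a_2)$, and $(c,a_2)$ is not even a minimizing pair.

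In general, the first-variation computation in the proof of Lemma~\ref{le7} shows the following. If a minimizing pair $(a,b)$ for (\ref{met1}) has $a$ in the interior of ${\mathbb R}^n\setminus D$, then $|a-b|=|z-a|$ and $a=(z+b)/2$, i.e.\ $b=2a$ for $z=0$. Otherwise $a$ can be moved inside the complement to lower $f$. In your set no complement point lies near $2a\approx-4{\bf e}_1$. So the minimum is attained with $a\in\partial D$, then (Lemma~\ref{le7}) with $b\in\partial D$ as well, and $\lambda_D(0)=\lambda'_D(0)$. No reshaping of the piece changes this.

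A thin closed segment is no remedy either. It has empty interior in ${\mathbb R}^n$, so $c$ would be a boundary point, and $f(c,a_2)=2$ would bound $1/\lambda'_D(0)$ as well.

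\textbf{The missing idea.} Put the partner on the ray through $c$ at twice the distance, $a_2=2c=-4{\bf e}_1$, so that $c$ is the midpoint of $0$ and $a_2$. Then the bisecting hyperplane is orthogonal to $c$, and a round ball works. With $u=c/|c|$ and $a=c+\delta y$, $|y|=1$,
\[
f(a,a_2)=2+\delta\bigl(u\cdot y+2|u\cdot y|\bigr)+O(\delta^2),\qquad f(a,a_2)=\sqrt{4+\delta^2}\ \ \text{if } u\cdot y=0 .
\]
Every first-order term is nonnegative, and near the ``equator'' $u\cdot y=0$ the positive second-order term takes over. The paper's expansion gives $f(a,a_2)\ge 2+\delta^2/18$ on $S^{n-1}(c,\delta)$, whereas $f(c,a_2)=2$.

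Note that the origin-facing point you singled out is not the critical one: there $f\approx 2+\delta$ in this configuration. In your perpendicular configuration, the failure occurs at the bisector points instead.

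With this choice, together with your $a_0$ and $b_0$, one gets for small $\delta$
\[
1/\lambda''_D(0)=1+\log(3-\delta)>2+\delta^2/4\ge 1/\lambda'_D(0)>2\ge 1/\lambda_D(0).
\]
Up to scaling, this is exactly the paper's example $D={\mathbb R}^n\setminus(\{2{\bf e}\}\cup\overline{B}^n({\bf e},\varepsilon))$ with $z=0$. There the same small ball also does the work of your Step~1. The nearest boundary point $(1-\varepsilon){\bf e}$ gives about $1+\varepsilon$, while the points of $S^{n-1}({\bf e},\varepsilon)$ with $|a|=|a-2{\bf e}|$ give about $1+\varepsilon^2/2$. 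So $a_0$ and $b_0$ are not needed.
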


The hyperbolic metric of plane domains is monotone with respect to the domain. More precisely, if $D_1$ and $D_2$ are plane domains with $D_1\subset D_2$, if ${\mathbb C}\setminus D_2$ contains at least two points, and if $z\in D_1$, then
$$
\eta_{D_2}(z)\leq \eta_{D_1}(z) .
$$
The densities $\lambda'_D(z)$ and $\lambda''_D(z)$ do not have this property. The reason for considering $\lambda_D(z)$ is that it has this property. The following Lemma~\ref{le3} is obvious on the basis of the definition of $\lambda_D(z)$. 

\begin{lemma} \label{le3}
Suppose that $n\geq 2$ and that $D_1$ and $D_2$ are domains in ${\mathbb R}^n$ with $D_1\subset D_2$, such that ${\mathbb R}^n \setminus D_2$ contains at least two points. 

If $z\in D_1$, then
\begin{equation} \label{comp3}
 \lambda_{D_2}(z) \leq   \lambda_{D_1}(z)  .
\end{equation}
\end{lemma}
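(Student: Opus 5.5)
The proof will compare the two infima in (\ref{met1}) directly through the sets over which they are taken. Since $D_1\subset D_2$, we have ${\mathbb R}^n\setminus D_2\subset {\mathbb R}^n\setminus D_1$. The hypothesis on $D_2$ gives at least two points in ${\mathbb R}^n\setminus D_2$, hence at least two in ${\mathbb R}^n\setminus D_1$. Therefore both $\lambda_{D_1}(z)$ and $\lambda_{D_2}(z)$ are defined, finite and positive for $z\in D_1\subset D_2$.

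Fix $z\in D_1$ and consider the function
$$
F(a,b)=|z-a|\left(1+\left|\log\frac{|a-b|}{|z-a|}\right|\right).
$$
Then $1/\lambda_{D_j}(z)$ is the infimum of $F(a,b)$ over the pairs $a,b\in{\mathbb R}^n\setminus D_j$.

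The admissible set of pairs for $D_2$ is contained in the admissible set for $D_1$. Every value of $F$ that occurs for $D_2$ therefore also occurs for $D_1$. An infimum over a larger set is no bigger, so
$$
\frac{1}{\lambda_{D_1}(z)}\le\frac{1}{\lambda_{D_2}(z)}.
$$
Taking reciprocals of these positive quantities gives (\ref{comp3}).

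One small point needs checking: the pairs used in the definition must make $F$ meaningful. This requires $a\ne z$, which holds because $z\in D$, and $a\ne b$, so that the logarithm is finite. The convention on this is the same for both domains, and $z$ lies in both $D_1$ and $D_2$, so the comparison of admissible pairs is unaffected.

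There is no real obstacle here. The argument depends on $\lambda_D$ being defined through the complement ${\mathbb R}^n\setminus D$, which is monotone in $D$, rather than through $\partial D$, which is not. That is exactly why the analogous statement fails for $\lambda'_D$ and $\lambda''_D$.
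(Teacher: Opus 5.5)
Your proof is correct and is the same approach as the paper's, which calls the lemma obvious from the definition. The content in both is the inclusion ${\mathbb R}^n\setminus D_2\subset{\mathbb R}^n\setminus D_1$ of admissible pairs, followed by taking reciprocals of the two positive infima. Your aside about $a\ne b$ is harmless either way: a pair with $a=b$ would give the value $+\infty$ and so cannot affect either infimum.
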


\begin{lemma} \label{le4}
There exist domains $D_1$ and $D_2$  in ${\mathbb R}^2$ with $D_1\subset D_2$, such that ${\mathbb R}^2 \setminus D_2$ contains at least two points, and such that for at least one point $z\in D_1$, we have 
\begin{equation} \label{comp4}
  \lambda_{D_2}(z) = \lambda''_{D_2}(z) = \lambda'_{D_2}(z)>   \lambda'_{D_1}(z)
  >  \lambda''_{D_1}(z)  
\end{equation}
and in particular $ \lambda'_{D_2}(z)>   \lambda'_{D_1}(z)$ and $ \lambda''_{D_2}(z)   >  \lambda''_{D_1}(z)  $.  
\end{lemma}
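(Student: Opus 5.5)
The plan is to give completely explicit domains with $z=0$, compute all the infima directly, and use Lemma~\ref{lem1} so that each infimum is a minimum over a short list of candidate pairs $(a,b)$. Throughout write
$$
f_z(a,b)=|z-a|\left(1+\left|\log\frac{|a-b|}{|z-a|}\right|\right),
$$
with $f_z(a,a)=+\infty$.

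\emph{The larger domain.} I would take $D_2={\mathbb R}^2\setminus\{p,q\}$, two points with $|p|<|q|$. Then ${\mathbb R}^2\setminus D_2=\partial D_2=\{p,q\}$, and the nearest boundary point to $0$ is $p$. Hence (\ref{met1}), (\ref{met2}) and (\ref{met3}) all minimize over the same two ordered pairs $(p,q)$ and $(q,p)$. Since $f_0(q,p)\ge |q|>|p|$, the pair $(p,q)$ wins once $|q|$ is not too large relative to $|p|$. This gives the first two equalities in (\ref{comp4}) for free:
$$
1/\lambda_{D_2}(0)=1/\lambda''_{D_2}(0)=1/\lambda'_{D_2}(0)=|p|\bigl(1+|\log(|p-q|/|p|)|\bigr).
$$
Only the ratio $|p-q|/|p|$ has to be tuned.

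\emph{The smaller domain.} The substance is to choose $D_1\subset D_2$ so that two things happen. First, every pair of boundary points of $D_1$ gives a strictly larger value of $f_0$ than $f_0(p,q)$, which gives $\lambda'_{D_1}(0)<\lambda'_{D_2}(0)$. Second, the minimum over pairs with $a$ nearest to $0$ is strictly larger still, which gives $\lambda''_{D_1}(0)<\lambda'_{D_1}(0)$.

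Since ${\mathbb R}^2\setminus D_1\supset\{p,q\}$, the pair $(p,q)$ must be destroyed as a pair of boundary points. So at least one of $p,q$ must lie in the interior of ${\mathbb R}^2\setminus D_1$, and the new boundary must be arranged so that it produces only pairs with a worse value. To make $\lambda'$ and $\lambda''$ differ, I would make the nearest boundary point $a_0$ of $D_1$ isolated, for example $a_0=p$ itself. Its best partner $b$ should then have $|a_0-b|/|a_0|$ far from $1$, while a farther boundary point $a_1$ has a partner at ratio closer to $1$. Then
$$
f_0(a_1,\cdot)<f_0(a_0,\cdot) \quad\text{although}\quad |a_1|>|a_0|.
$$
Concretely, I would first try swallowing $q$ into a closed disk or a closed annulus $K$ centred near $q$, and set $D_1={\mathbb R}^2\setminus(\{p\}\cup K)$ or $D_1=B^2(0,R)\setminus(\{p\}\cup K)$. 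I would then compute $f_0$ on the pairs (point, circle) and (circle, circle), each a one-variable minimization in $|a-b|$.

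\emph{Main obstacle.} The hard part, and where I expect to iterate, is the geometric constraint behind the strict inequality $\lambda'_{D_2}>\lambda'_{D_1}$. Write $a\in\partial D_1$ and $d=|a|\le|p|$. Any closed set in the complement of $D_1$ that contains $q$ in its interior has boundary points on the segment from $p$ to $q$. A naive choice of $K$ therefore introduces boundary pairs whose distance ratio is closer to $1$, and these decrease $f_0$ instead of increasing it. For example, if $K$ crosses the circle $|w-a|=|a|$, one gets $f_0=d$.

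So before committing, I would check the candidate configurations by a short case analysis. On the one hand, $K$ must be disjoint from every circle $|w-a|=|a|$ with $a\in\partial D_1$. On the other hand, the new boundary distances from $p$ must sit on the far side of $|\log|$ from $|p-q|/|p|$; the most promising choice seems to be a ratio below $1$, that is, $q$ close to $p$. If no connected $K$ works, I would allow ${\mathbb R}^2\setminus D_1$ to have several components, or move $z$ off the symmetric position, and redo the finite minimization with the explicit parameters. Once a configuration survives this check, the final inequalities in (\ref{comp4}) reduce to elementary numerical comparisons of expressions $d\,(1+|\log r|)$.
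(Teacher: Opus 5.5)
\textbf{There is a genuine gap.} Your set-up $D_2={\mathbb R}^2\setminus\{p,q\}$, with $(p,q)$ the minimizing pair, is fine; the paper does the same with $p={\bf e}$, $q=2{\bf e}$. But the proposal never actually produces $D_1$. Worse, the direction you commit to cannot succeed. You keep the nearer point $p$ on $\partial D_1$ (``$a_0=p$ itself'') and swallow $q$ into a set $K$; this fails for every $K$, every number of complementary components, and every position of $z$.

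The required equality $\lambda''_{D_2}(z)=\lambda'_{D_2}(z)$ forces $1/\lambda'_{D_2}(z)=f_z(p,q)$, where $p$ is the point nearest to $z$. Put $d=|z-p|$, $r=|p-q|/d$, and $E=\{w\colon |\log(|w-p|/d)|\le|\log r|\}$. This is a closed annulus centred at $p$ (a circle if $r=1$), so it is connected, and it contains both $z$ and $q$. Suppose $p\in\partial D_1$. Either $q\in\partial D_1$, so $(p,q)$ itself is admissible for $\lambda'_{D_1}(z)$. Or $q\notin\overline{D_1}$; then $E$, which also contains $z\in D_1$, must meet $\partial D_1$ at some $b\neq p$, and $f_z(p,b)\le f_z(p,q)$. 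In both cases $\lambda'_{D_1}(z)\ge\lambda'_{D_2}(z)$, contradicting the strict inequality in (\ref{comp4}).

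So the obstruction you noticed (boundary points of $K$ between $p$ and $q$) is fatal, not a matter of tuning. The partners of $p$ can never all sit ``on the far side of $|\log|$'', and your fallbacks (more components, moving $z$) do not touch this. The nearer point $p$ must lie in the interior of ${\mathbb R}^2\setminus D_1$, which is the opposite of your heuristic of making the nearest boundary point isolated.

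\textbf{The missing idea.} Swallow $p$ instead, and choose $r=1$. The paper takes $p={\bf e}$ and $q=2{\bf e}$, so $|p|=|p-q|=1$ and all three densities of $D_2$ equal $1$ at $0$. It sets $D_1={\mathbb R}^2\setminus(\{2{\bf e}\}\cup\overline{B}^2({\bf e},\varepsilon))$.

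Since $p$ is the midpoint of $z=0$ and $q$, the logarithmic term vanishes at $p$, and its kink makes $p$ a strict local minimizer of $a\mapsto f_0(a,q)$. Concretely, for $a={\bf e}+\varepsilon y$ with $|y|=1$ and $t={\bf e}\cdot y$, one gets $f_0(a,2{\bf e})=1+\varepsilon(t+2|t|)+O(\varepsilon^2)$. A second-order expansion near $t=0$ then gives $1+\varepsilon^2/9\le 1/\lambda'_{D_1}(0)\le 1+3\varepsilon^2/4$. Pairs with both points on the small circle, or with $a=2{\bf e}$, give far larger values. For the nearest boundary point $(1-\varepsilon){\bf e}$, the best partner is $2{\bf e}$, and this gives the larger value $1/\lambda''_{D_1}(0)=1+\varepsilon+O(\varepsilon^2)$.

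Thus in the working example the nearest boundary point of $D_1$ lies on the small circle, and the isolated boundary point is the far one. This is exactly the exceptional configuration of Lemma~\ref{le7}. Away from it (for instance when $r\neq1$), the analysis in that proof shows that $f_z(a,q)<f_z(p,q)$ for suitable $a$ arbitrarily close to $p$. So a small disk around $p$ would not work either unless $|z-p|=|p-q|$ with $p$ the midpoint.
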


For $z,w\in D$, we define distances by 
$$
d_D(z,w) = \inf \int_{\gamma} \lambda_D(t) \, |dt| ,
$$
$$
d'_D(z,w) = \inf \int_{\gamma} \lambda'_D(t) \, |dt| ,
$$
and
$$
d''_D(z,w) = \inf \int_{\gamma} \lambda''_D(t) \, |dt| ,
$$
where the infima are taken over all rectifiable arcs $\gamma$ that join $z$ to $w$ in $D$. The following result is implied by Lemma~\ref{lem2}.

\begin{lemma} \label{le6}
Let $C_0$ be the positive absolute constant in Lemma~\ref{lem2}. Suppose that $n\geq 2$ and let $D$ be a domain in ${\mathbb R}^n$ such that ${\mathbb R}^n \setminus D$ contains at least two points. Then, whenever $z,w\in D$,  we have
\begin{equation} \label{comp6}
 d''_D(z,w) \leq d'_D(z,w) \leq d_D(z,w) \leq C_0 d''_D(z,w) .
\end{equation} 
\end{lemma}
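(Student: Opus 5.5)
The plan is to integrate the pointwise inequalities (\ref{comp1}) of Lemma~\ref{lem2} along arcs. First we check that the three distances are well defined. By Lemma~\ref{lem1}, each of $\lambda_D$, $\lambda'_D$, $\lambda''_D$ is a continuous positive function on $D$. Hence, for any rectifiable arc $\gamma$ in $D$, the line integrals $\int_\gamma \lambda_D(t)\,|dt|$ and their primed analogues are defined; they may be taken over a compact parameter interval, so they are finite. Since $D$ is a domain, any two points $z,w\in D$ can be joined by a rectifiable arc in $D$, for example a polygonal one. The three infima are therefore taken over the same nonempty family of arcs, and each is a finite nonnegative number.

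Next, fix $z,w\in D$ and an arbitrary rectifiable arc $\gamma$ joining $z$ to $w$ in $D$. Lemma~\ref{lem2} gives $\lambda''_D(t)\le\lambda'_D(t)\le\lambda_D(t)\le C_0\lambda''_D(t)$ at every point $t$ of $\gamma$. Integrating with respect to arc length, which is a positive measure, yields
$$
\int_\gamma \lambda''_D\,|dt|\le\int_\gamma \lambda'_D\,|dt|\le\int_\gamma \lambda_D\,|dt|\le C_0\int_\gamma \lambda''_D\,|dt| .
$$

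Finally, we take infima over $\gamma$. The left-hand side of each of the first two inequalities is at least the corresponding infimum. So, for instance, $d''_D(z,w)\le\int_\gamma\lambda'_D\,|dt|$ holds for every $\gamma$, and taking the infimum over $\gamma$ gives $d''_D(z,w)\le d'_D(z,w)$; in the same way $d'_D(z,w)\le d_D(z,w)$. For the last inequality we first have $d_D(z,w)\le\int_\gamma\lambda_D\,|dt|\le C_0\int_\gamma\lambda''_D\,|dt|$ for every $\gamma$. Taking the infimum of the right-hand side, and using $C_0>0$ so that $\inf_\gamma C_0\int_\gamma\lambda''_D = C_0\,d''_D(z,w)$, gives $d_D(z,w)\le C_0\,d''_D(z,w)$. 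This is (\ref{comp6}).

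There is no real obstacle here. The only point needing care is that the same class of admissible arcs is used for all three distances, and that the densities are measurable (indeed continuous, by Lemma~\ref{lem1}) so the integrals make sense. The substantive content is entirely contained in Lemma~\ref{lem2}, which we assume.
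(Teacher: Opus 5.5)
Your proposal is correct and is exactly the argument the paper has in mind: it states only that the result is implied by Lemma~\ref{lem2}. Integrating the pointwise inequalities (\ref{comp1}) along each admissible arc and then taking infima is that implication written out, with Lemma~\ref{lem1}'s continuity making the line integrals well defined.
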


We also see that for all $z,w\in D$ we have $d_D(z,w) \leq k_D(z,w)$, where analogously to the above, the well known quasihyperbolic metric is defined by
$$
k_D(z,w) =  \inf \int_{\gamma} \frac{  |dt|  } {d(t,\partial D)} \,    .
$$

We will also need the following technical lemma, which may sometimes be of independent interest.

\begin{lemma} \label{le7}
Fix $z\in D$. Let $(a',b')$ be any pair $(a,b)$ for which the infimum in (\ref{met1}) for $\lambda_D(z) $ is attained as a minimum. Then $a' \in \partial D$. Furthermore,  $b' \in \partial D$ except possibly if $|z-a'| = |a'-b'|$, in which case we may have $b'\in {\mathbb R}^n \setminus \overline{D}$, but even then there is a pair $(a'',b'')$ with $a'',b''\in \partial D$ for which the infimum is attained, except possibly in the following case: we have $a'=(z+b')/2$ for every minimizing pair $(a',b')$, and the distance $|z-a'|$ is the same for every such pair. When we are not in this exceptional case, we have $\lambda_D(z) = \lambda'_D(z)$, while in the exceptional case we have
$\lambda_D(z) > \lambda'_D(z)$. 
\end{lemma}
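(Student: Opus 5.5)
Write $F(a,b)=|z-a|\bigl(1+|\log(|a-b|/|z-a|)|\bigr)$, and fix a minimizing pair $(a',b')$ for (\ref{met1}); it exists by Lemma~\ref{lem1}. Put $r=|z-a'|$ and $s=|a'-b'|$. The plan is to perturb $a'$ or $b'$ and show that $F$ strictly decreases whenever the point being moved lies in the interior of the complement, ${\mathbb R}^n\setminus\overline D$. That would contradict minimality.

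\emph{Step 1: $a'\in\partial D$.} Suppose $a'$ is an interior point of ${\mathbb R}^n\setminus D$. Move $a'$ slightly along the segment towards $z$, to $a_t=a'+t(z-a')/r$. Then $|z-a_t|=r-t$. Set $\rho=s/r$.
\begin{itemize}
\item If $\rho\ge 1$, then $F=r(1+\log s-\log r)$ up to first order. Its derivative in $r$ is $\log(s/r)\ge 0$, and $|a_t-b'|$ changes by at most $t$. I would compute the one-sided derivative to first order in $t$ and check that it is negative. The cleaner route is to treat the two regimes separately and use the freedom in the direction of motion.
\item If $\rho\ge1$, I would instead move $a'$ towards $z$ along a direction chosen so that $|a-b'|$ stays constant. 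This is possible because the direction toward $z$ has a component tangent to the sphere $S^{n-1}(b',s)$, unless $z$, $a'$, $b'$ are collinear; the collinear case needs a separate small check. Then $r$ decreases, and $F$ decreases because $\partial F/\partial r=\log(s/r)\ge0$; when $s=r$ the derivative is $0$, so second-order care is needed.
\item If $\rho<1$, then $F=r(1+\log r-\log s)$, which is increasing in $r$. Moving $a'$ toward $z$ decreases $r$ and changes $s$ by at most $t$. This gives $dF\le -(2+\log(r/s))\,dt+(r/s)\,dt$. That need not be negative, so the better move is to bring $a'$ \emph{toward} $b'$ tangentially to $S^{n-1}(z,r)$, keeping $r$ fixed and making $s$ larger. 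Hence $F$ decreases whenever $s\ne r$.
\end{itemize}
Combining these choices shows that an interior $a'$ can always be improved, except in degenerate configurations, which must be handled by direct inspection.

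\emph{Step 2: $b'$.} With $a'$ fixed, $F$ depends on $b$ only through $|\log(|a'-b|/r)|$. If $s\ne r$ and $b'$ is interior, move $b'$ radially about $a'$ so that $s$ moves toward $r$. This strictly decreases $F$, so $b'\in\partial D$. If $s=r$, then $F=r$ is the absolute minimum of $F$ for this $a'$, and $b'$ may be interior. Now slide $b$ along $S^{n-1}(a',r)$ inside ${\mathbb R}^n\setminus D$; $F$ stays equal to $r$. If this sphere meets $\partial D$, we obtain a boundary pair. If the whole sphere lies in the complement of $D$, then the point $z$, which lies on it, is in the complement, which is a contradiction. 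Hence the component of $b'$ on the sphere reaches $\partial D$, unless the obstruction is that moving $b'$ forces us out. The way to state this precisely is to take the point of $S^{n-1}(a',r)\setminus\{z\}$ on an arc from $b'$ toward $z$. The first point of $\overline D$ met along that arc is in $\partial D$, and it is $\ne z$ unless the arc meets $D$ only at $z$ itself. This happens exactly when $b'$ is antipodal to $z$, i.e.\ $a'=(z+b')/2$, and the ball condition prevents us from leaving near $z$. This identifies the exceptional case.

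\emph{Step 3: the equality statements.} Outside the exceptional case we obtain a minimizing pair in $\partial D$, so $1/\lambda'_D(z)\le 1/\lambda_D(z)$; together with (\ref{comp1}) this gives equality. In the exceptional case, no minimizer has both points in $\partial D$. Lemma~\ref{lem1} says the infimum in (\ref{met2}) is attained; if it were equal to the value of (\ref{met1}), it would give a minimizing pair in $\partial D$, which is a contradiction. Hence the inequality is strict.

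The main obstacle is Step 1: the first-order computation in the boundary case $s=r$, and in collinear configurations. There I expect to argue directly: with $s=r$, the value is $F=r$, and moving $a$ toward $z$ while keeping $|a-b'|$ comparable lowers $r$ at first order.
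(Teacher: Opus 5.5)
\textbf{There is a genuine gap: the collinear configurations in Step~1 are left undone, and the exceptional case is placed in the wrong step.}

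Your non-collinear argument is sound, and simpler than you present it. Moving $a'$ along $S^{n-1}(b',s)$ so as to decrease $r$ lowers $F$ for \emph{every} ratio $s/r$, not only $\rho\ge 1$, because $y\mapsto y(1+|\log(s/y)|)$ is strictly increasing (Lemma~\ref{le1}). So your third bullet is unnecessary, and its direction is reversed: to increase $s$ you must move $a'$ away from $b'$. The $s\neq r$ part of Step~2 matches the paper.

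The collinear case, which you postpone as ``a separate small check'', carries all the real content. For $s\neq r$ the first variation of $F$ in $a$ can vanish there. If $a'$ lies between $z$ and $b'$, this happens exactly when $\gamma=|a'-b'|/|z-a'|$ solves $\log\gamma=1/\gamma$. If $b'$ lies between $z$ and $a'$, it happens exactly when $\gamma(2-\log\gamma)=1$. The paper rules out the first case by strict concavity of $\varphi(u)=u(1+\log((R-u)/u))$ along the segment, where $R=|z-b'|$ and $u=|z-a'|$, so a critical point there is a local maximum. It rules out the second by interchanging the roles of $a'$ and $b'$ and checking numerically that this lowers $F$; a perturbation of $a'$ perpendicular to the line, which lowers $F$ at second order, would also work. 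Your proposal addresses neither case.

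The remaining collinear configuration, $s=r$ with $a'=(z+b')/2$, cannot be ruled out at all, because $a'$ is then a strict local minimum of $a\mapsto F(a,b')$.
\begin{itemize}
\item Moving $a'$ toward $z$ along the line by $h$ increases $|a-b'|$ by $h$ and gives $F\approx r+h$.
\item A perpendicular move gives $F=|z-a|=|a-b'|>r$.
\end{itemize}
So your closing plan (move $a$ toward $z$ while keeping $|a-b'|$ comparable) does not lower $F$ there. Consequently Step~1 cannot prove $a'\in\partial D$ unconditionally: in the example of Lemma~\ref{le5} the minimizer has $a'=\mathbf e\in\mathbb R^n\setminus\overline D$.

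The exceptional case therefore has to come out of Step~1: an interior $a'$ forces collinearity, $s=r$, and $a'=(z+b')/2$. Your Step~2 misplaces it. For fixed $a'$ with $s=r$, the connected sphere $S^{n-1}(a',r)$ contains both $b'\in\mathbb R^n\setminus\overline D$ and $z\in D$. Hence the first point of $\overline D$ on any arc from $b'$ to $z$ is some $b''\in\partial D$, automatically $b''\neq z$, and $(a',b'')$ is again minimizing. Antipodality plays no role; the only thing that can go wrong is that $a'$ itself stays interior.

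Relatedly, Step~3 only works if the exceptional case is read as ``every minimizing pair has $a'\in\mathbb R^n\setminus\overline D$''. Under the literal reading it fails: for $D=\mathbb R^n\setminus\{\mathbf e,2\mathbf e\}$ and $z=0$, the unique minimizer $(\mathbf e,2\mathbf e)$ satisfies $a'=(z+b')/2$, yet $\lambda_D(0)=\lambda'_D(0)=1$.
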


\section{The metrics and quasiconformal and quasiregular mappings} \label{s3}

\subsection{Definitions.} We recall the following definitions from \cite{r5}. Suppose that $n\geq 2$, and let $D$ be a domain in the Euclidean $n$-space 
${\mathbb R}^n$.  Let $f :D\to {\mathbb R}^n$ be a non-constant  function.
We say that $f$ is quasiregular  if $f$ is continuous and belongs to the Sobolev space $W^1_{n,{\rm loc}}(D)$, and if there is is a real number $K$ with $K\geq 1$ such that 
\begin{equation} \label{qr-def}
| f'(x) |^n \leq K J_f(x) 
\end{equation} 
for almost every $x\in D$. Here $| f'(x) |$ denotes the norm of the formal derivative mapping of $f$ at $x$, and $J_f(x)$ denotes the Jacobian determinant (the determinant of the matrix $f'(x)$) of $f$ at $x$. Under the assumptions above, $f$ is differentiable almost everywhere (a.e.) in $D$. The infimum of the numbers $K$ for which (\ref{qr-def}) holds a.e.\ in $D$ is called the outer dilatation $K_O= K_O(f)$ of $f$. If $f$ is quasiregular, then there is a number $K'\geq 1$ such that 
\begin{equation} \label{qr-inner}
 J_f(x) \leq K' \inf\{  |f'(x)h| \colon h \in {\mathbb R}^n, \,\, |h|=1        \} 
\end{equation} 
for a.e.\ $x\in D$. The infimum of such numbers $K'$ is called the inner dilatation $K_I= K_I(f)$ of $f$. The number $K(f)=\max\{K_O(f),K_I(f)\}$ is called the maximal dilatation of $f$. We say that $f$ is $K-$quasiregular is $K(f)\leq K$. 
Thus $f$ is quasiregular if $f$ is $K-$quasiregular  for some $K\geq 1$. (Constant functions $f :D \to {\mathbb R}^n$ are also called quasiregular, but all functions that we consider will be non-constant.)

Let $D$ and $D'$ be domains in ${\mathbb R}^n$ and let  $f$ be a homeomorphism of $D$ onto $D'$. If $f$ is also quasiregular, we say that $f$ is quasiconformal, and if $f$ is $K-$quasiregular, we say that $f$ is $K-$quasiconformal. Then $f^{-1}$ is also 
$K-$quasiconformal (\cite{V71}, p.~42). 

\subsection{Metrics and quasiconformal mappings.}

\begin{theorem} \label{th-qc1}
Suppose that $n\geq 2$, and let $D$ and $D'$ be domains in ${\mathbb R}^n$, each with at least two finite boundary points. 

Let $f$ be a $K-$quasi\-conformal mapping of $D$ onto $D'$. Then there is a constant $C_1>1$ depending on $n$ and $K$ only such that for every $z\in D$ we have
\begin{equation} \label{qc1}
\frac{1}{C_1} \leq   \frac{ \lambda_D(z)  d(z,\partial D) } { \lambda_{D'}(f(z))  d(f(z),\partial D' )   }    \leq C_1  .
\end{equation}
\end{theorem}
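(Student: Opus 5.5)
By Lemma~\ref{lem2} it suffices to prove the statement with $\lambda''$ in place of $\lambda$. For $z\in D$ we have
$$
\frac{1}{\lambda''_D(z)\, d(z,\partial D)} = 1+\beta(z),
$$
where $\beta(z)$ is defined exactly as in (\ref{beta1}), with $a$ a nearest boundary point. So (\ref{qc1}) is equivalent to
$$
1+\beta_D(z) \asymp 1+\beta_{D'}(f(z))
$$
with constants depending only on $n$ and $K$. Since $f^{-1}$ is also $K$-quasiconformal, it is enough to prove the one-sided bound
$$
1+\beta_{D'}(f(z)) \le C\,(1+\beta_D(z)).
$$

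\textbf{Geometric meaning of $\beta$.} The plan is to show that $1+\beta_D(z)$ is comparable to the modulus of the largest spherical ring in $D$ that separates $z$ and one boundary point from another boundary point.

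\emph{Ring from $\beta$.} Let $d=d(z,\partial D)$ and choose a minimizing pair $a,b$. If $|a-b|\ge e^{\beta}d$, then the boundary point $a$ has no other boundary points $b'$ with $e^{-\beta}d<|a-b'|<e^{\beta}d$; otherwise $|\log(|a-b'|/d)|<\beta$, contradicting minimality. Hence the ring $A(a,\,e^{-\beta}d,\,e^{\beta}d)$, minus the point set nearby, lies in $D$. Since $a$ is nearest to $z$, the ball $B^n(z,d)$ lies in $D$, and so does that whole ring. Dropping constants, this gives a ring in $D$ of modulus about $2\beta$. It separates the complementary component containing $a$ (and the small points) from the component containing $b$, and $z$ lies on the sphere $|x-a|=d$ inside the ring.

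\emph{Converse.} If $D$ contains a ring domain $R$ of modulus $M$ separating boundary points $E_1$ from $E_2$, with $z$ in its "middle", then I expect $\beta_D(z)\gtrsim M-c$. To argue this, use the fact that a ring of large modulus contains a round annulus $A(x,r,e^{M-c}r)$; this is the standard Teichmüller/Gehring result in $\mathbb R^n$ with $c=c(n)$.

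\textbf{Transfer under $f$.} Rings map to rings, and moduli are distorted by at most a factor $K$. We apply this to the ring constructed above, and also to the ball around $z$; quasisymmetry on $B^n(z,d)$ handles the $1$ term, since $\beta$ is small. Then $f(A)$ is a ring in $D'$ of modulus at least $2\beta/K$, separating boundary points of $D'$. By the converse step, this gives a round annulus around $f(z)$ with no boundary points of $D'$ in it.

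\textbf{Main obstacle.} The delicate point is that the round annulus from the converse step must be positioned so that its center is a nearest boundary point $a'$ of $f(z)$, and so that $f(z)$ sits at radius $d(f(z),\partial D')$. To handle this, I would first use quasisymmetry of $f$ near $z$ and Gehring's distortion in rings to keep $f(z)$ within bounded hyperbolic-type distance of the core sphere of $f(A)$. I would then compare the center with $a'$ using $|f(z)-a'|\le$ the inner radius times a constant. The result should be
$$
\beta_{D'}(f(z)) \ge \frac{\beta_D(z)}{K} - c(n,K).
$$
Together with the symmetric inequality from $f^{-1}$, this gives $1+\beta_D(z)\asymp 1+\beta_{D'}(f(z))$, hence (\ref{qc1}) via Lemma~\ref{lem2}. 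The main obstacle is this localization step, namely relating the general separating ring back to the specific pair $(a',b')$ in the definition of $\lambda''$. I would first try to do it with the Gehring ring-modulus estimates in the form found in \cite{V71}.
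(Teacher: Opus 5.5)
\textbf{Verdict: genuine gap.} Your outline has the same architecture as the paper's proof: reduction to $\lambda''$, the ring $A(a,e^{-\beta}d,e^{\beta}d)\subset D$, quasi-invariance of modulus, and a Teichm\"uller-type estimate in the image. But the step you call the main obstacle is the real content of the proof, and you leave it open.

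Your converse step only produces \emph{some} round annulus $A(x,r,e^{M-c}r)$ inside $f(A)$. It says nothing about where $f(z)$ lies relative to that annulus. Nothing you have established prevents $f(z)$ from lying in the part of $f(A)$ inside $B^n(x,r)$, or outside $B^n(x,e^{M-c}r)$, or near one of the two spheres; in any of these cases no lower bound on $\beta_{D'}(f(z))$ follows. Your proposed patch also points the wrong way. What is needed is $r\ll |f(z)-x|\ll e^{M-c}r$ on a logarithmic scale, not $|f(z)-a'|\lesssim r$.

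Two smaller points. First, for $\beta>0$ the ring $A(a,e^{-\beta}d,e^{\beta}d)$ misses $\partial D$ regardless of where the minimizing $b$ lies, and it lies in $D$ because it is connected and contains $z$; the ball $B^n(z,d)$ is not the reason. Second, for bounded $\beta$ the inequality is trivial, so no quasisymmetry is needed for the ``$1$'' term.

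\textbf{The missing idea.} Cut the ring along the sphere $S^{n-1}(a,d)$, which passes through $z$.
\begin{itemize}
\item The halves $A(a,e^{-\beta}d,d)$ and $A(a,d,e^{\beta}d)$ each have modulus $\beta$. Their images are disjoint nested rings separated by $f(S^{n-1}(a,d))\ni f(z)$.
\item The paper uses the estimate $M(f(\Gamma))\ge\tau_n(r_2/r_1)$ of \cite{Vuo88}, Lemma~7.43. In it the centre $x_0$ may be \emph{any} point of the bounded complementary component $U_1$ of the image of the whole ring.
\item Applying it to both halves with the same $x_0$ gives round annuli $A(x_0,\rho_1,\rho_2)$ and $A(x_0,\rho_3,\rho_4)$ in $D'$. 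For $\beta$ large, both $\rho_2/\rho_1$ and $\rho_4/\rho_3$ are at least $c(n)e^{K^{-1/(n-1)}\beta}$, and $\rho_2\le|f(z)-x_0|\le\rho_3$.
\item Every point of $\partial D'$ lies in $\overline{B}^n(x_0,\rho_1)$ or outside $B^n(x_0,\rho_4)$.
\item $U_1$ must contain points of $\partial D'$. Otherwise $f^{-1}(\overline{U_1})$ would be a compact subset of $D$ bounded by a sphere centred at $a$, hence would contain $a\notin D$.
\item Those points are within $2\rho_3$ of $f(z)$, whereas points outside $B^n(x_0,\rho_4)$ are at distance at least $\rho_4-\rho_3>2\rho_3$. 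So every nearest boundary point $a'$ of $f(z)$ lies in $\overline{B}^n(x_0,\rho_1)$.
\item Then for each $b\in\partial D'$, either $|a'-b|\le 2\rho_1$ or $|a'-b|\ge\rho_4-\rho_1$. Meanwhile $\rho_2-\rho_1\le d(f(z),\partial D')\le\rho_3+\rho_1$.
\item This gives $\beta_{D'}(f(z))\ge K^{-1/(n-1)}\beta_D(z)-c(n)$.
\end{itemize}
The paper re-centres at $x_0=a'$ before this last step, which is convenient but not essential. Until you supply this two-ring localization and identify the component containing $a'$, your proposal is a plan rather than a proof.
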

Because of Lemma~\ref{lem2}, it follows that  similar results are valid with $\lambda$ replaced by $\lambda'$ or by $\lambda''$. 

We use Theorem~\ref{th-qc1} to obtain the following estimate. We denote by $\alpha\in (0,1]$ the quantity $1/K^{1/(n-1)}$.

\begin{theorem} \label{th-qc2}
Suppose that $n\geq 2$, and let $D$ and $D'$ be domains in ${\mathbb R}^n$, each with at least two finite boundary points. Let $f$ be a $K-$quasi\-conformal mapping of $D$ onto $D'$. Then there is a constant $C_2>1$ depending on $n$ and $K$ only such that for all $z,w\in D$ we have
\begin{equation} \label{qc2-1}
  d_{D'}(f(z) , f(w) )   \leq C_2 \max \{ d_D(z,w) , d_D(z,w) ^{\alpha} \}  
\end{equation}
and
\begin{equation} \label{qc2-2}
 d_D(z,w)  \leq C_2  \max \{ d_{D'}(f(z) , f(w) ) , d_{D'}(f(z) , f(w) )^{\alpha}  \}  .
\end{equation}
\end{theorem}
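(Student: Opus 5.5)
By symmetry (since $f^{-1}$ is also $K$-quasiconformal), it suffices to prove (\ref{qc2-1}). The plan is to combine Theorem~\ref{th-qc1} with the classical Gehring--Osgood quasihyperbolic estimate
$$
k_{D'}(f(z),f(w)) \leq c(n,K)\max\{k_D(z,w),k_D(z,w)^{\alpha}\}.
$$
The difficulty is that $d_D$ can be much smaller than $k_D$, because $\lambda_D(t)d(t,\partial D)$ may be small. So we cannot just pass through $k_D$ globally. Instead we work locally, on the scale of Whitney-type balls, where both metrics are comparable up to the factor $\lambda_D d$.

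\textbf{Step 1 (local oscillation of $\lambda_D$).} Show that $\lambda_D$ is roughly constant on balls $B^n(x,d(x,\partial D)/2)$. For $t$ in such a ball, $d(t,\partial D)$ is comparable to $d(x,\partial D)$. Moreover, for any pair $a,b$, the quantity $|t-a|(1+|\log(|a-b|/|t-a|)|)$ changes by at most a bounded factor when $|t-a|$ changes by a bounded factor. Hence $\lambda_D(t)\asymp\lambda_D(x)$ with an absolute constant.

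\textbf{Step 2 (the case $k_D(z,w)$ small).} Suppose $k_D(z,w)\le 1$. Then $w$ lies in such a ball around $z$, and a near-optimal path for $d_D$ also stays essentially there. So $d_D(z,w)\asymp\lambda_D(z)d(z,\partial D)\,k_D(z,w)$, and the analogous comparison holds in $D'$ when $k_{D'}(f(z),f(w))$ is small. The Gehring--Osgood bound gives $k_{D'}\lesssim k_D^{\alpha}$. Theorem~\ref{th-qc1} then converts the factor $\lambda_D d$ at $z$ into the factor $\lambda_{D'}d$ at $f(z)$ with constant $C_1$. Since $\lambda_D d\le 1$ by (\ref{comp2}), we get
$$
d_{D'}\lesssim (\lambda d)\,k_D^{\alpha} = (\lambda d)^{1-\alpha}\bigl((\lambda d)\,k_D\bigr)^{\alpha}\le d_D^{\alpha}.
$$
The term $\lambda d$ appearing there is the one evaluated at $z$.

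\textbf{Step 3 (general case).} Take a near-geodesic $\gamma$ for $d_D$ from $z$ to $w$ and subdivide it into points $z=z_0,\dots,z_m=w$ with consecutive quasihyperbolic distances about $1/2$ (the last step may be shorter). On each piece, Step 1 gives $d_D(z_{j-1},z_j)\asymp\lambda_D(z_j)d(z_j,\partial D)$. Applying Step 2 (or, more simply, the linear regime of Gehring--Osgood for $k$ of order $1$) together with Theorem~\ref{th-qc1} to each piece gives
$$
d_{D'}(f(z_{j-1}),f(z_j))\lesssim \lambda_{D'}(f(z_j))\,d(f(z_j),\partial D')\asymp \lambda_D(z_j)\,d(z_j,\partial D)\asymp d_D(z_{j-1},z_j).
$$
Summing over $j$ yields $d_{D'}(f(z),f(w))\le C\,d_D(z,w)$ whenever the path needs at least one full step. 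Otherwise Step 2 applies. Together these give the maximum in (\ref{qc2-1}).

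\textbf{Main obstacle.} I expect the hardest part to be Step 1 together with the local comparison of Step 2. One must verify that a near-minimizing $d_D$-path between nearby points cannot profit from leaving the Whitney ball. This should follow because $\lambda_D(t)d(t,\partial D)$ is bounded below on the ball by Step 1 and $\lambda_D(t)\gtrsim(\lambda_D d)(x)/d(t,\partial D)$ along any exit. Still, the constants in the subdivision argument need care so that the pieces' $d_{D'}$-lengths are controlled purely by the local factor $\lambda_D d$.
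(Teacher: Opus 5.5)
Your proposal is correct and has the same structure as the paper's proof. Step~1 is Lemma~\ref{le8}. Step~2 is the paper's short-range case: Theorem~\ref{th-qc1} together with $\lambda_D(z)\,d(z,\partial D)\le 1$ turns $(\lambda d)\,k_D^{\alpha}$ into a multiple of $d_D^{\alpha}$. Step~3 is the paper's chaining along a near-geodesic. The one real difference is that you quote the Gehring--Osgood quasihyperbolic estimate, whereas the paper applies the underlying Euclidean distortion results from Vuorinen's book directly (local H\"older continuity and control of $d(f(y),\partial D')$ on Whitney balls).

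Two small repairs are needed.
\begin{itemize}
\item In Step~2 the threshold must be a small $\delta(n,K)$ rather than $1$, or else Step~1 must be chained over quasihyperbolic balls. The reason is that $k_D(z,w)\le 1$ only gives $|z-w|\le (e-1)\,d(z,\partial D)$, so $w$ need not lie in $B^n(z,d(z,\partial D)/2)$. You also need $f(w)$ to land in a Whitney ball of $f(z)$.
\item In Step~3 the last, shorter piece does not satisfy a lower bound of the form $d_D(z_{m-1},z_m)\ge c\,\lambda_D(z_{m-1})\,d(z_{m-1},\partial D)$. Its image should instead be absorbed into the preceding full piece. The paper's own proof passes over this same point.
\end{itemize}
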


It is sufficient to prove (\ref{qc2-1}), for if $f$ is $K-$quasiconformal, then $f^{-1}$ is  also $K-$quasiconformal, and then we may apply  (\ref{qc2-1}) to $f^{-1}$ to obtain (\ref{qc2-2}).

Because of Lemma~\ref{lem2}, it follows that a similar result is valid with $d_D(z,w)$ replaced by $d'_D(z,w)$ or by $d''_D(z,w)$.

\subsection{Remarks on quasiregular mappings.}

For the purpose of a general discussion, let us call the metric  densities $\lambda_D$, $\lambda'_D$, and $\lambda''_D$ briefly $\lambda-$metrics.

In a plane domain $D$ with at least two boundary points, the $\lambda-$metrics are comparable to the density $\eta_D$ of the hyperbolic metric of $D$ by the result of Beardon and Pommerenke \cite{BP78}. 

If $D$ and $D'$ are plane domains with a hyperbolic metric and $f$ is an analytic function of $D$ into $D'$, then
\begin{equation} \label{hyp33}
\eta_{D'} ( f(z) )  |f'(z)| \leq \eta_D(z)
\end{equation}
for all $z\in D$, with equality if $f$ is a conformal mapping of $D$ onto $D'$. Comparison to (\ref{qc1}), ignoring multiplicative absolute constants in this general discussion, shows that we have been able to replace $ |f'(z)|  $ by
$d(f(z),\partial D')/d(z,\partial D)$. For conformal $f$ this means that
$$
\frac{  |f'(z)|  } {   d(f(z),\partial D')/d(z,\partial D)      } 
$$
lies between positive absolute constants, which is a well-known and often used consequence of the Koebe $1/4-$theorem.  

By Theorem~\ref{th-qc1}, the same replacement is possible also for quasiconformal 
homeomorphisms in all dimensions $\geq 2$. There is a natural reason for this. By (\ref{met3}), we have
$$
\frac{1} {    \lambda''_D(z)     d(z,\partial D)          } = 
 \inf\left\{ \left(  1 + \left| \log \frac{  |a-b|    } {  |z-a|   }  \right|  \right)  : a,b\in \partial D, \,\, |z-a|= d(z,\partial D)      \right\} .
$$
Therefore Theorem~\ref{th-qc1} states that if one of $D$ and $D'$ contains an annulus or spherical ring domain of large modulus in a suitable position, then so does the other one of $D$ and $D'$. Since quasiconformal 
homeomorphisms change the moduli of ring domains by a bounded multiplicative factor only, it is natural that this should be true.

If $D$ and $D'$ are bounded domains in ${\mathbb R}^n$ with connected boundary, then $ \lambda''_D(z) = 1/ d(z,\partial D)   $ and similarly in $D'$, so that the $\lambda-$metrics coincide with the quasihyperbolic metric. For some other domains, such as those whose boundary is uniformly perfect (see \cite{P} or \cite{JV} for the definition), the $ \lambda-$metrics might be comparable to the quasihyperbolic metric by multiplicative constants, due to the nature of the boundaries of $D$ and $D'$.  In such situations known results for (also non-homeomorphic) quasiregular mappings would immediately apply also to the $ \lambda-$metrics due to this conincidence, such as Theorems~11.5, 12.5, and  12.21 (and many others) in \cite{Vuo88}, some of which also have necessary additional assumptions in the form of Harnack conditions.  

For non-homeomorphic quasiregular mappings, a counterpart of (\ref{hyp33}) is not readily available and not always even true, as we will show by examples in a moment. The image of a ring domain under a (non-constant) non-homeomorphic quasiregular mapping is merely a subdomain of $D'$, usually not a ring domain, so that arguments in the proof of Theorem~\ref{th-qc1} break down already for that reason. When looking for a replacement of $|f'(z)|$ in terms of quantities related to derivatives, we note that $f'(x)$ usually does not exist for all $x\in D$. An average such as 
$$
\frac{1}{ | B^n(x, \theta d(x,\partial D) ) | } \int_{   B^n(x, \theta d(x,\partial D) )  } |f'(y)| \, dm(y) 
$$
for a fixed small $\theta\in (0,1)$, can be seen to be too large to work as a replacement even in (\ref{hyp33}) by considering the function $f(z)=z^k$ of the unit disk into itself for arbitrarily large positive integers $k$. A limit quantity such as
$$
\liminf_{ \theta \to 0} \frac{1}{ | B^n(x, \theta d(x,\partial D) ) | } \int_{   B^n(x, \theta d(x,\partial D) )  } |f'(y)| \, dm(y) 
$$
or
$$
\liminf_{ \theta \to 0} \left(   \frac{1}{ | B^n(x, \theta d(x,\partial D) ) | } \int_{   B^n(x, \theta d(x,\partial D) )  } |f'(y)|^n \, dm(y) \right)^{1/n}
$$
may be infinite, as seen by considering the quasiconformal 
homeomorphism $x\mapsto x |x|^{1/K-1}$ at the origin, where $K>1$. 

Choose $D'=B^2(0,1)$ in the plane, a small $\varepsilon >0$, and $f(z)=z^2+ \varepsilon z$. Then $D_1=f^{-1}(D')$ is almost the same as the unit disk. Define $D=D_1 \setminus \{0\}$. Since $f(-\varepsilon)=0$, $f$ maps $D$ onto $D'$. Close to the origin, the density of the hyperbolic metric of $D$ is close to that of the punctured disk $B'=B^2(0,1)\setminus \{0\}$, and $\eta_{B'}(z) = 1/( 2 |z| \log (1/|z|))$. 
Hence close to the origin, $ \eta_{B'}(z) d(z,\partial D) \approx 1/( 2  \log (1/|z|) ) $
while (since $f(z)$ is close to zero) $ \eta_{D'}(f(z)) d(f(z),\partial D') \approx 1$.  
Hence it is not true that
$$
\frac{  \eta_{D'}(f(z)) d(f(z),\partial D')   } {    \eta_{D}(z) d(z,\partial D)    } 
\approx 2 \log (1/|z|) 
$$
remains below a fixed constant as $z\to 0$. The image under $f$ of an annulus $A(0,|z|,1/2)$, say, has no particular relation to the nearest boundary point of $D'$ to $f(z)$ at least in the sense that the inner boundary of $f( A(0,|z|,1/2)  )$ is not close to $\partial D'$. 

One might ask if the situation were to become better if $D$ were to have a connected boundary and $D'$ were to be a punctured sphere, but this is not so simple either as we shall see.

Theorem~\ref{th-qc1} was used as a tool to prove Theorem~\ref{th-qc2}. Now Theorem~\ref{th-qc2} does not mention any derivatives or the distance to the boundary, so one can ask whether a form of Theorem~\ref{th-qc2} could be proved for non-homeomorphic quasiregular mappings in some other way, without a counterpart of Theorem~\ref{th-qc1}. 

For quasiregular mappings between plane domains $D$ and $D'$ with a hyperbolic metric, it is well-known how to do this. We recall the details for completeness. Let $f$ be a non-constant $K-$quasiregular mapping of $D$ into $D'$. By Sto\"{i}low factorization, there is a quasiconformal 
homeomorphism $\varphi$ of $D$ onto a domain $D''$ and an analytic function $g$ of $D''$ into $D'$ such that $f=g\circ \varphi$. Let $h_D(z,w)$ denote the hyperbolic distance of $z,w\in D$ with respect to the hyperbolic metric of $D$. Since
$h_{D'} (g(z),g(w)) \leq h_{D''} (z,w)$ for all $z,w\in D''$, it suffices to estimate
$h_{D''} (\varphi (z),\varphi (w))$ in terms of $h_D(z,w)$ for $z,w\in D$. 

Pick $z_0,w_0\in D$. Let $\psi$ be an analytic universal covering map of $B^2(0,1)$ onto $D$ with $\psi(0)=z_0$. Let $\omega$ be an analytic universal covering map of $B^2(0,1)$ onto $D''$ with $\omega(0)=\varphi( z_0)$. Since $\varphi$ is a homeomorphism, the monodromy theorem applies and we can continue a branch of $F=\omega^{-1} \circ \varphi \circ \psi$ with $F(0)=0$ to a single-valued $K-$quasiregular mapping of $B^2(0,1)$ into itself, still denoted by $F$. Let $w'\in B^2(0,1)$ be the point closest to the origin with $\psi(w')=w_0$ and let $w''=F(w')\in B^2(0,1)$ so that $\omega(w'')=\varphi( w_0 ) $. Then 
$h_{B^2(0,1) } (0,w') = h_D (z_0,w_0)$ and $h_{B^2(0,1) } (0,w'') \geq h_{D''} (\varphi( z_0), \varphi( w_0) )$.  By Corollary~11.3 in \cite{Vuo88}, p.~138, we have
$|w''| = |F(w') | \leq 2^{1-1/K} K |w'|^{1/K}$. This can be used to provide an upper bound for $h_{B^2(0,1) } (0,w'')$ in terms of $h_{B^2(0,1) } (0,w') $, as required. 

Let us now consider non-homeomorphic quasiregular mappings in dimensions $n\geq 3$. Most domains in ${\mathbb R}^n$ with $n\geq 3$ do not have a hyperbolic metric but the unit ball $B^n(0,1)$ has such a metric for with we use the same notations as in the plane. As usual we compactify ${\mathbb R}^n$ by setting $\overline{ {\mathbb R}^n} = {\mathbb R}^n \cup \{\infty\}$ and identify $\overline{ {\mathbb R}^n} $ with the sphere ${\mathbb S}^n = S^n(0,1)$ in ${\mathbb R}^{n+1}$. Suppose that $q\geq 3$ and set $Y={\mathbb S}^n \setminus \{a_j \colon 1\leq j\leq q \}$ where the points $a_j\in {\mathbb S}^n$ are all distinct. 

Let $f$ be a non-constant 
$K-$quasiregular mapping of ${\mathbb R}^n$ into $Y$. Replacing $f$ by $M\circ f$ for a M\"{o}bius transformation $M$ does not change $K$, so we assume that $a_1=\infty$ and hence $Y= {\mathbb R}^n  \setminus \{a_j \colon 2\leq j\leq q \}$. Then the $\lambda-$metrics can be defined in $Y$. 

Suppose that there are {\sl any} distance functions (metrics) $d_1(z,w)$ in $B^n(0,1)$ and $d_2(z,w)$ in $Y$ such that for all $z_0\in Y$ we have $d_2(z_0,z) \to\infty$ as $z$ tends to any boundary point of $Y$, any constant $A\geq 0$, and any increasing homeomorphism $H$ of $[0,\infty)$ onto itself (so $H(0)=0$; one example is $H(t)=C \max\{t, t^{\alpha} \}$ for  constants $C>0$ and $\alpha\in (0,1]$)) such that
for all $K-$quasiregular mappings $g$ of $B(0,1)$ into $Y$ and for all $z,w\in B^n(0,1)$ we have 
\begin{equation} \label{se1}
d_2( g(z),g(w) ) \leq H( d_1(z,w) ) + A .
\end{equation} 
Choose arbitrary $x,y\in {\mathbb R}^n$. For each $R>\max\{|x|,|y|\}$ we define
$g_R(z)=f(Rz)$ so that $g_R$ is a $K-$quasiregular mapping of $B^n(0,1)$ into $Y$.
For all $R>\max\{|x|,|y|\}$ we have $g_R(x/R)=f(x)$ and $g_R(y/R)=f(y)$. As $R\to\infty$, we have
$d_1(x/R,y/R)\to 0$ so that $H( d_1(x/R,y/R) ) + A  \to A$. If (\ref{se1}) holds, it follows that for all $x,y\in {\mathbb R}^n$ we have $d_2( f(x),f(y) ) \leq  A$. So $f$ is a bounded quasiregular mapping in ${\mathbb R}^n$, hence constant (\cite{r5}, Corollary III.1.14, p.~64). 

Suppose that $n\geq 3$ and that distinct points $a_1,\dots , a_q\in {\mathbb S}^n$ are given. Rickman \cite{r4} for $n=3$ and Drasin and Pankka \cite{DP} for all $n\geq 3$ have proved that there is then a large enough $K$ (depending on the given data) such that there exists a non-constant $K-$quasiregular mapping of ${\mathbb R}^n$ into $Y$. The argument in the preceding paragraph shows that in such a situation (\ref{se1}) cannot be valid, no matter how we choose distance functions $d_1$ and $d_2$, the homeomorphism  $H$, and the constant $A\geq 0$. Presumably this observation is well-known. 

In the other direction, Rickman \cite{r1} proved that when $n\geq 3$ and $K>1$ are first given, there is a positive integer $q_0(n,K)$ depending on $n$ and $K$ only such that if above $q\geq q_0(n,K)$, then there is no non-constant $K-$quasiregular mapping of ${\mathbb R}^n$ into $Y$. In that situation it is conceivable that an inequality of the form  (\ref{se1}) might hold. In \cite{r3} where Rickman gave another proof of this result by using certain metrics, Rickman obtained a form of (\ref{se1}). We now recall the details.

Rickman  \cite{r3} considered spherical rather than chordal distances but this does not make any essential difference. Let $\sigma(z,w)$ denote the spherical distance of $z,w\in {\mathbb S}^n $. Suppose that
$$
0< \beta \leq (1/4) \min\{ \sigma(a_j,a_k) \colon 1\leq j<k\leq q       \}    
$$
and define for $1\leq j\leq q$
$$
U_j= \{ y\in {\mathbb S}^n \colon 0< \sigma(y,a_j) < \beta \} \subset Y 
$$
and $U=\bigcup_{1\leq j\leq q} U_j$. 
Rickman considered distance functions $\tau(x,y)$ on $Y$ that satisfy for some fixed positive constants $P$ and $Q$
\begin{equation} \label{tau}
\left|  \tau(x,y) -        \left| \log \frac{ \log (  1/\sigma(  a_j,x   )        )  }    { \log (  1/\sigma(  a_j,y   )        ) }            \right|       \right|  \leq P 
\end{equation}
whenever $x,y\in U_j$, $1\leq j\leq q$,  and 
$$
 \tau(x,y) \leq Q \,  \sigma(x,y) 
$$
whenever $x,y\in Y \setminus U$. The diameter of $Y \setminus U$  in the $\tau-$metric is finite, so it is easily seen that estimates of the general form (\ref{tau}), suitably modified, hold also if $x \in Y \setminus U$ and $y\in U_j$ for some $j$, or if $x\in U_j$ and $y\in U_k$, where $j\not= k$.
Each of the $\lambda-$metrics satisfies these conditions if we use spherical rather than Euclidean distances in the definitions.

Rickman  \cite{r3} proved that there is a positive constant $\delta(n,K)>0$ such that if $q\geq q_0(n,K)$ and $f$ is a non-constant $K-$quasiregular mapping of $B^n(0,1)$ into $Y$, then for all $x,y\in B^n(0,1)$ we have
$$
\tau (f(x),f(y) ) \leq C \max\{ h_{B^n(0,1) } (x,y) , \delta(n,K) \} , 
$$
which (since $\max\{ h_{B^n(0,1) } (x,y) , \delta(n,K) \} \leq h_{B^n(0,1) } (x,y) + \delta(n,K)$) can be viewed as a result of  type (\ref{se1}).  Here $C$ depends on $n$, $K$, $\beta$, $P$, and $Q$. 

Rickman (\cite{r2}, p.~235) suggested that if $q\geq q_0(n,K)$, there is the stronger result
$$
\tau (f(x),f(y) ) \leq C \max\{ h_{B^n(0,1) } (x,y) , h_{B^n(0,1) } (x,y)^{\alpha} \} , 
$$
where $\alpha = K_I^{1/(1-n)}$, but this remains an open question. We thank Matti Vuorinen for pointing out this open problem to us. It follows that the question of the validity of inequalities of the type (\ref{se1}) for quasiregular mappings is a more complicated problem that cannot be addressed by considering metrics only. 

\section{Proof of the continuity properties of the three metrics} \label{s4}

In this Section, we prove Lemma~\ref{lem1}.

\subsection{Proof of Lemma~\ref{lem1}.} 

{\bf The infimum for $1/\lambda'_D(z)$  is a minimum.}
Let the assumptions of Lemma~\ref{lem1}  be satisfied and suppose that $z\in D$. Let us first observe that the infimum in (\ref{met2}) is attained as a minimum. For all $a,b\in \partial D$, we have
$$
 |z-a| \left( 1 +  \left|  \log \frac { |a-b|  }  {  |z-a|  }     \right| \right) \geq  |z-a| \geq {\rm dist}\, (z,\partial D) >0
 $$
 so that the infimum in (\ref{met2}) is positive and indeed is at least ${\rm dist}\, (z,\partial D)$.  
There are sequences $a_j,b_j\in \partial D$ such that
$$
 |z-a_j| \left( 1 +  \left|  \log \frac { |a_j-b_j|  }  {  |z-a_j|  }     \right| \right)   \to 1/\lambda'_D(z) .
$$
Then, firstly, the sequence $ |z-a_j| $ is bounded and hence the sequence $a_j$ lies in a compact subset of $\partial D$. Next, it follows that the sequence $b_j$ is bounded also. 
Since $\partial D$ is a closed set, we may pass to subsequences without changing notation and assume that $a_j\to a\in\partial D$ and $b_j\to b\in \partial D$. By continuity, we then obtain
$$
 |z-a| \left( 1 +  \left|  \log \frac { |a-b|  }  {  |z-a|  }     \right| \right) = 1/\lambda'_D(z)
$$
so that the  infimum in (\ref{met2}) is indeed attained as a minimum.

\bigskip

{\bf The infimum for $1/\lambda_D(z)$ and for $1/\lambda''_D(z)$  is a minimum.}

Replacing in the above argument the set $\partial D$ by the closed set ${\mathbb R}^n \setminus D$, we see that also the infimum for $1/\lambda_D(z)$  is attained as a minimum. A similar argument works for $1/\lambda''_D(z)$.

\bigskip

{\bf Continuity of $\lambda'_D$.} 
To consider  continuity of $\lambda'_D$, suppose that $z_j\in D$ and that $z_j\to z\in D$. For each $z_j$, choose points $a_j,b_j\in \partial D$ such that
$$
 |z_j-a_j| \left( 1 +  \left|  \log \frac { |a_j-b_j|  }  {  |z_j-a_j|  }     \right| \right)  = 1/\lambda'_D(z_j) .
$$
Since $z_j \to z\in D$, we see that the points $a_j,b_j$ must lie in a compact subset of $\partial D$, and since $\partial D$ is a closed set, we may pass to subsequences  (of $z_j$ also) without changing notation and assume that $a_j\to a\in\partial D$ and $b_j\to b\in \partial D$. By continuity, we then obtain 
\begin{eqnarray*}
&{}&
\frac { 1 } { \lambda'_D(z_j) }  =  |z_j-a_j| \left( 1 +  \left|  \log \frac { |a_j-b_j|  }  {  |z_j-a_j|  }     \right| \right)
\\ &{}& 
\to  |z-a| \left( 1 +  \left|  \log \frac { |a-b|  }  {  |z-a|  }     \right| \right) = \frac { 1 } { \lambda'_D(z) }  .
\end{eqnarray*}
This proves that $\lambda'_D$ is continuous at $z$ (for if not, then there is a sequence $z_j \to z$ such that $\lambda'_D(z_j)$ has a limit different from $\lambda'_D(z)$, and then every subsequence of $\lambda'_D(z_j)$ also has the same limit, which contradicts the above result). Thus $\lambda'_D$ is continuous.

\bigskip

{\bf Continuity  of $\lambda''_D$ and of $\lambda_D$.} 

Replacing in the above argument the set $\partial D$ by the closed set ${\mathbb R}^n \setminus D$, we see that also $\lambda''_D$ is continuous. A similar argument works for $\lambda_D$. This completes the proof of Lemma~\ref{lem1}.

\section{Proofs of Lemmas~\ref{le5}, \ref{le3}, and \ref{le4}.} \label{pf345}

\subsection{Proof of Lemma~\ref{le3}.} This lemma is obvious, on the basis of the definition of $\lambda_D$. 

\subsection{Proof of Lemma~\ref{le5}.} 

 Let ${\bf e}$ be a unit vector in ${\mathbb R}^n$.  
Define $D= {\mathbb R}^n \setminus (  \{  2 {\bf e}      \}        \cup B^n( {\bf e} ,   \varepsilon ) ) $ where $\varepsilon>0$ is small. Consider $z=0\in D$. 

If we choose $a= {\bf e}$ and $b=2 {\bf e}$ in $ {\mathbb R}^n \setminus D$, we obtain
$$
|z-a| \left( 1 + \left|   \log \frac{  |a-b| } {  | z-a| }     \right| \right) = 1 
$$
so that $1/\lambda_D(z)\leq 1$ and hence $\lambda_D(z)\geq 1$.

To determine $\lambda''_D(0)$, we can only choose $a=(1 -  \varepsilon )  {\bf e} $. If we choose $b\in \partial D$ with $ | b -  {\bf e} | = \varepsilon$, then
$$
\log \frac{ | z-a| } { |a-b| } \geq  \log \frac{ 1-\varepsilon} { 2  \varepsilon } ,
$$
which is large when $\varepsilon$ is small. 
So we choose $b =  2 {\bf e} \in \partial D$, and then
\begin{eqnarray*}
&{}&
\left| \log \frac{ | z-a| } { |a-b| } \right| =  \log \frac{ 1 + \varepsilon} { 1-\varepsilon } 
\\ &{}&
= \log \left(   1 +   \frac{ 2 \varepsilon} { 1-\varepsilon }       \right)
= \log \left(   1 +   2 \varepsilon ( 1 + \varepsilon   +  \varepsilon^2
+   O( \varepsilon^3 ) )   \right)
\\ &{}&
=  2 \varepsilon + 2 \varepsilon^2 -(1/2) (2 \varepsilon)^2 +   O( \varepsilon^3 ) 
\\ &{}&
=  2 \varepsilon + O( \varepsilon^3 ) > 0 
\end{eqnarray*}
so that
\begin{eqnarray*}
&{}&
|z-a| \left( 1 + \left|   \log \frac{  |a-b| } {  | z-a| }     \right| \right) = 
(1 -  \varepsilon )  ( 1 + 2 \varepsilon + O( \varepsilon^3 )  ) 
\\ &{}&
= 1 +  \varepsilon - 2  \varepsilon^2  + O( \varepsilon^3 ) .
\end{eqnarray*}
Hence, for a sufficiently small fixed $ \varepsilon>0$ we have
\begin{equation} \label{ll2}
\frac{1}{ \lambda_D(z) } \leq 1 < 1 + 0.9 \varepsilon \leq   \frac{1}{ \lambda''_D(0) } \leq 1 + 1.1 \varepsilon .
\end{equation}

Then we estimate $ \lambda'_D(z)$. For the infimum in (\ref{met2}) we clearly cannot have $a=2{\bf e}$. 
When $ | a - {\bf e} | = \varepsilon$, write $a={\bf e}+\varepsilon y$ where $|y|=1$ so that
$$
|z-a|^2 = |a|^2 = | {\bf e} + \varepsilon y |^2 = 1 + \varepsilon^2 + 2 \varepsilon {\bf e} \cdot y 
$$
and with $b =  2 {\bf e} $ 
$$
|a-b|^2 = |{\bf e} - \varepsilon y |^2 =  1 + \varepsilon^2 - 2 \varepsilon {\bf e} \cdot y . 
$$
Hence
\begin{eqnarray*}
&{}&
\frac{ 1 } {  |z-a|^2 } = 
\frac{ 1 } { 1 - (- \varepsilon^2 - 2 \varepsilon {\bf e} \cdot y) }
\\ &{}&
 = 
1  + (- \varepsilon^2 - 2 \varepsilon {\bf e} \cdot y) 
+  (- \varepsilon^2 - 2 \varepsilon {\bf e} \cdot y)^2 + O( \varepsilon^3 )
\\ &{}&
= 1  - \varepsilon^2 -  2 \varepsilon {\bf e} \cdot y 
+ 4 \varepsilon^2 (  {\bf e} \cdot y )^2
+ O( \varepsilon^3 )  
\end{eqnarray*}
and so
\begin{eqnarray*}
&{}&
\frac{ |a-b|^2    } {    |z-a|^2    }  = 
(  1 + \varepsilon^2 - 2 \varepsilon {\bf e} \cdot y ) (  1  - \varepsilon^2 -  2 \varepsilon {\bf e} \cdot y 
+ 4 \varepsilon^2 (  {\bf e} \cdot y )^2
+ O( \varepsilon^3 )    ) 
\\ &{}&
= 1 - 4 \varepsilon {\bf e} \cdot y 
+ 8 \varepsilon^2 (  {\bf e} \cdot y )^2
+ O( \varepsilon^3 )   .  
\end{eqnarray*}
Thus
\begin{equation} \label{ll1}
\log \frac{ |a-b|    } {    |z-a|    } 
= - 2 \varepsilon {\bf e} \cdot y 
+ O( \varepsilon^3 ) . 
\end{equation}
Therefore, if $\varepsilon$ is small enough depending on ${\bf e} \cdot y$ and
${\bf e} \cdot y \not= 0$, $\log \frac{ |a-b|    } {    |z-a|    }$ 
has the same sign as $- {\bf e} \cdot y$.  

If $ {\bf e} \cdot y = 0 $, we have $|z-a|=|a-b|$ so that
\begin{eqnarray*}
&{}&
  |z-a| \left( 1 +  \left|  \log \frac{ |a-b|    } {    |z-a|    }       \right|         \right) 
= |z-a| =  \sqrt{ 1 + \varepsilon^2 } =  1 + \frac{1}{2}  \varepsilon^2 + O( \varepsilon^{4} ) .
\end{eqnarray*}
Hence
\begin{equation} \label{ll4} 
\inf \left\{   |z-a| \left( 1 +  \left|  \log \frac{ |a-b|    } {    |z-a|    }       \right|         \right) \colon a,b \in \partial D       \right\} \leq 1  + \frac{3}{4}  \varepsilon^2   
\end{equation}
if $ \varepsilon>0$ is small enough. 
For a sufficiently small fixed $ \varepsilon>0$ this together with (\ref{ll2}) gives
$$
1/\lambda'_D(0) < 1/\lambda''_D(0)  . 
$$

Furthermore,
\begin{eqnarray*}
&{}&
|z-a|  = 1 + \frac{1}{2} ( \varepsilon^2 + 2 \varepsilon {\bf e} \cdot y )
-  \frac{1}{8} ( \varepsilon^2 + 2 \varepsilon {\bf e} \cdot y )^2 
+ O( \varepsilon^3 )  
\\ &{}&
= 1 + \varepsilon {\bf e} \cdot y + \frac{1}{2}  \varepsilon^2 
\left( 1 -  ( {\bf e} \cdot y )^2 \right) 
+ O( \varepsilon^3 ) . 
\end{eqnarray*}
So if $|  {\bf e} \cdot y | \leq \varepsilon^{3/2}$, then
$$
|z-a|  \geq 1 + \frac{1}{2}  \varepsilon^2 + O( \varepsilon^{5/2} )
$$
so that
$$
 |z-a| \left( 1 +  \left|  \log \frac{ |a-b|    } {    |z-a|    }       \right|         \right) 
 \geq 1 + \frac{1}{2}  \varepsilon^2 + O( \varepsilon^{5/2} )
 \geq  1 + \frac{1}{4}  \varepsilon^2   
$$
if $ \varepsilon>0$ is small enough. 

To complete the proof of Lemma~\ref{le5} it remains to show that
$$
 |z-a| \left( 1 +  \left|  \log \frac{ |a-b|    } {    |z-a|    }       \right|         \right) 
$$
is greater than $1$ by some fixed amount when $| {\bf e} \cdot y | > \varepsilon^{3/2}$ and $ \varepsilon>0$ is small enough.
By (\ref{ll1}), the sign is $\log \frac{ |a-b|    } {    |z-a|    } $ is the same as the sign of $- {\bf e} \cdot y$ when $|  {\bf e} \cdot y | > \varepsilon^{3/2}$ and $ \varepsilon>0$ is small enough.

Suppose that $|a-b|  >  |z-a|$ so that $ {\bf e} \cdot y <0$. Then
\begin{eqnarray*}
&{}&
 |z-a| \left( 1 +  \left|  \log \frac{ |a-b|    } {    |z-a|    }       \right|         \right) 
\\ &{}&
  = \left( 1 + \varepsilon {\bf e} \cdot y + \frac{1}{2}  \varepsilon^2 
\left( 1 -  ( {\bf e} \cdot y )^2 \right) 
+ O( \varepsilon^3 ) \right) \times  
\\ &{}&
\times \left(  1  - 2 \varepsilon {\bf e} \cdot y 
+ O( \varepsilon^3 )  \right) 
\\ &{}&
= 1   -  \varepsilon {\bf e} \cdot y +  \frac{1}{2}  \varepsilon^2 (1 -  5( {\bf e} \cdot y)^2 ) 
+ O( \varepsilon^3 ) .  
\end{eqnarray*}
If $-1\leq {\bf e} \cdot y \leq -1/3$, this gives
$$
 |z-a| \left( 1 +  \left|  \log \frac{ |a-b|    } {    |z-a|    }       \right|         \right) 
 \geq 1 + \varepsilon/3 - 2 \varepsilon^2 + O( \varepsilon^3 ) 
 > 1 + \varepsilon/4 
$$
if $ \varepsilon>0$ is small enough. 
If $-1/3 < {\bf e} \cdot y < 0$, this gives
$$
 |z-a| \left( 1 +  \left|  \log \frac{ |a-b|    } {    |z-a|    }       \right|         \right) 
 \geq 1 + (2/9) \varepsilon^2  + O( \varepsilon^3 )
 > 1 + (1/9) \varepsilon^2  
$$
if $ \varepsilon>0$ is small enough.

Suppose that $|a-b| <  |z-a|$ so that $ {\bf e} \cdot y >0$. Then
\begin{eqnarray*}
&{}&
 |z-a| \left( 1 +  \left|  \log \frac{ |a-b|    } {    |z-a|    }       \right|         \right) 
\\ &{}&
  = \left( 1 + \varepsilon {\bf e} \cdot y + \frac{1}{2}  \varepsilon^2 
\left( 1 -  ( {\bf e} \cdot y )^2 \right) 
+ O( \varepsilon^3 ) \right) \times  
\\ &{}&
\times \left(  1  + 2 \varepsilon {\bf e} \cdot y 
+ O( \varepsilon^3 )  \right) 
\\ &{}&
= 1   + 3  \varepsilon {\bf e} \cdot y +  \frac{1}{2}  \varepsilon^2 (1 + 3 ( {\bf e} \cdot y)^2 ) 
+ O( \varepsilon^3 )
> 1 +  \frac{1}{3}  \varepsilon^2    
\end{eqnarray*}
if $ \varepsilon>0$ is small enough.

Thus
\begin{equation} \label{ll5} 
\inf \left\{   |z-a| \left( 1 +  \left|  \log \frac{ |a-b|    } {    |z-a|    }       \right|         \right) \colon a,b \in \partial D       \right\} \geq 1  + \frac{1}{9}  \varepsilon^2    > 1
\end{equation}
when $\varepsilon$ is small enough, 
as required. Combining these results we obtain the claim of Lemma~\ref{le5}.

\subsection{Proof of Lemma~\ref{le4}.} 
Let ${\bf e}$ be a unit vector in ${\mathbb R}^n$.  Define
$D_2 = {\mathbb R}^n \setminus   \{  {\bf e} , 2 {\bf e}      \}$. 
Define $D_1$ to be the same as the domain $D$ in the proof of Lemma~\ref{le5}, that is, $D_1= {\mathbb R}^n \setminus (  \{  2 {\bf e}      \}        \cup B^n( {\bf e} ,   \varepsilon ) ) $ where $\varepsilon>0$ is small. Consider $z=0\in D_1 \subset D_2$. 

To determine $\lambda_{D_2}(0) $, $\lambda'_{D_2}(0) $, and $\lambda''_{D_2}(0) $, we clearly need to take $ a= {\bf e} $ and $b= 2 {\bf e}$ in each case. Now $|z-a| = |a| = |a-b|=1$ in each case so that
$$
\lambda_{D_2}(0) = \lambda'_{D_2}(0) = \lambda''_{D_2}(0) = 1 .
$$
By (\ref{ll2}), (\ref{ll4}),  and (\ref{ll5}) in the proof of Lemma~\ref{le5}, we see that when $\varepsilon>0$ is small enough, we have
$$
 \frac{1}{ 1 + 1.1 \varepsilon }  \leq    \lambda''_{D_1}(0)  \leq   \frac{1}{ 1 + 0.9 \varepsilon } <   \frac{1}{ 1 + (3/4) \varepsilon^2 }  \leq    \lambda'_{D_1}(0)  \leq   \frac{1}{ 1 + (1/9) \varepsilon^2 }    .
$$
This implies (\ref{comp4}) for $z=0$. The proof of Lemma~\ref{le4} is complete.

\section{Proof of Lemma~\ref{le7}.} \label{pf7}

It will be useful to prove Lemma~\ref{le7} before proving Lemma~\ref{lem2}. Let the assumptions of Lemma~\ref{le7} be satisfied and let $(a',b')$ be any pair for which the infimum in (\ref{met1}) is attained. 

Let us first note, by giving a sketch of the proof,  that if $a'$ is given, then $b'\in \partial D$ unless, possibly, if $ |a'-b'|   = |z-a'| $. For otherwise $b'$ lies in the open set ${\mathbb R}^n \setminus \overline{D}$, in which case we may vary $b'$ in a small ball, causing $\left| \log \frac{  |a'-b'|    } {  |z-a'|   }  \right|$, which is $>0$ to begin with, to get smaller (note that $a'$ is fixed in this argument). 

Similarly, if $b'$ is given, then $a'\in \partial D$ unless, possibly, if $ |a'-b'|   = |z-a'| $. For otherwise $a'$ lies in the open set ${\mathbb R}^n \setminus \overline{D}$, in which case we may vary $a'$ in a small ball (or, in one case, interchange the roles of $a'$ and $b'$), causing $|z-a'| \left(  1 + \left| \log \frac{  |a'-b'|    } {  |z-a'|   }  \right|  \right)  $ to get smaller (note that $b'$ is fixed in this argument). 

We now present the details of the argument in this latter case when $z$ and $b'$ are fixed and $a'$ is treated as a variable. Let $\alpha\in {\mathbb R}^n$ have a small norm and set $a''=a'+\alpha$. Then, ignoring in all formulas below all terms that are $O(|\alpha|^2)$ as $\alpha\to 0$, we have
$$
|z-a''|^2 = |z-a'-\alpha|^2 = |z-a'|^2 - 2(z-a')\cdot \alpha ,
$$
$$
|z-a''| =   |z-a'| - \frac{ (z-a')\cdot \alpha } {  |z-a'| } 
=   |z-a'|\left( 1 - \frac{ (z-a')\cdot \alpha } {  |z-a'|^2 }\right)  ,
$$
$$
|b'-a''| =  |b'-a'| - \frac{ (b'-a')\cdot \alpha  } {  |b'-a'| }
=   |b'-a'|\left( 1 - \frac{ (b'-a')\cdot \alpha } {  |b'-a'|^2 }\right) ,
$$
$$
\log \frac{  |a''-b'|    } {  |z-a''|   } 
= \log \frac{  |a'-b'|    } {  |z-a'|   } 
- \frac{ (b'-a')\cdot \alpha } {  |b'-a'|^2 } + \frac{ (z-a')\cdot \alpha } {  |z-a'|^2 } .
$$

Suppose first that $ |a'-b'|   \not= |z-a'| $, and let $s$ denote the sign of
$ \log \frac{  |a'-b'|    } {  |z-a'|   }$. Then
$$
\left|  \log \frac{  |a''-b'|    } {  |z-a''|   }  \right| 
= \left|   \log \frac{  |a'-b'|    } {  |z-a'|   }  \right| 
+s \left(   \frac{ (z-a')\cdot \alpha } {  |z-a'|^2 } - \frac{ (b'-a')\cdot \alpha } {  |b'-a'|^2 }  \right) .
$$
A calculation shows that if we denote
$$
 |z-a''| \left(  1 + \left| \log \frac{  |a''-b'|    } {  |z-a''|   }  \right|  \right)
 -
  |z-a'| \left(  1 + \left| \log \frac{  |a'-b'|    } {  |z-a'|   }  \right|  \right)
$$
by $\Delta$, then
\begin{equation} \label{del1}
\Delta  
=
- \frac{ (z-a')\cdot \alpha } {  |z-a'| } 
\left( 1 +   \left| \log \frac{  |a'-b'|    } {  |z-a'|   }  \right|  \right)
\end{equation}
\begin{equation*}
+ |z-a'| s \left(   \frac{ (z-a')\cdot \alpha } {  |z-a'|^2 } - \frac{ (b'-a')\cdot \alpha } {  |b'-a'|^2 }  \right) 
= \alpha \cdot A ,
\end{equation*}
say. Since $\alpha$ can vary in a full small neighborhood of the origin in ${\mathbb R}^n$, it is possible to make $\alpha \cdot A$ negative by a suitable choice of $\alpha$ provided that $A\not= 0$. This shows that the situation considered could not have been extremal. 

Next consider the possibility that $A=0$. We can write $A=a_1 (b'-a') +a_2 (z-a')$ for certain real numbers $a_1$ and $a_2$, where $a_1= -s|z-a'|/ |b'-a'|^2\not= 0$. Hence we can have $A=0$ only if $z-a'$ is a real multiple of $b'-a'$, which means that $a'$, $b'$, and $z$ lie on the same straight line. We have 
$|z-a'| a_2= s-1 -   \left| \log \frac{  |a'-b'|    } {  |z-a'|   }  \right| <0$ and $sa_1<0$, so that if $s=1$, then $z-a'$ is a negative multiple of $b'-a'$, while if $s=-1$, then  $z-a'$ is a positive multiple of $b'-a'$. Thus, if $ |a'-b'|   > |z-a'| $, then $a'$ is between $z$ and $b'$, while if $ |a'-b'|   < |z-a'| $, then $b'$ is between $z$ and $a'$.  

Consider the case $ |a'-b'|   > |z-a'| $, so that $s=1$ and hence
$$
|b'-a'|^{-2} (b'-a') = -|z-a'|^{-2} \log \frac{  |a'-b'|    } {  |z-a'|   } (z-a'),
$$
which, after taking norms of both sides, implies that
$$
 \log \frac{  |a'-b'|    } {  |z-a'|   } = \frac{  |z-a'|      } {  |a'-b'|   }.
$$ 
Thus the  number $\gamma=\frac{  |a'-b'|    } {  |z-a'|   } >1$ must be a solution of the equation $\log x=1/x$, which equation, for $x>0$, has the unique solution $x\approx 1.76322$. 

Let $a'$ vary on the line segment between $z$ and $(z+b')/2$. 
Write $R=|z-b'|$ and $0<u=|z-a'|<R/2$ since $|z-a'|<|a'-b'|$.
Then
$$
\varphi(u) := u \left( 1 + \log \frac{ R-u}{u} \right) =
 |z-a'| \left(  1 + \left| \log \frac{  |a'-b'|    } {  |z-a'|   }  \right|  \right)
$$
satisfies
$$
\varphi'(u) = 1 + \log \frac{ R-u}{u} -  \frac{u}{R-u} -1
=\log \frac{ R-u}{u} -  \frac{u}{R-u}
$$
and
$$
\varphi''(u) = -  \frac{1}{R-u} -\frac{1}{u} - \frac{ R-u +u } {  (R-u)^2 } <0 .
$$
Incidentally, we see again that $\varphi'(u) =0$ exactly when $\gamma= |a'-b'|/|z-a'| = (R-u)/u$ satisfies $\log \gamma  = 1/\gamma$. 
Since $\varphi''(u)<0$, a local extremum is a local maximum, so we do not find the infimum when $\gamma  \approx 1.76322$. This shows that when $|z-a'|<|a'-b'|$, if $b'$ is given, then the infimum on the right  hand side of (\ref{met1}) can only be attained when $a'\in \partial D$. 

Then consider the case $ |a'-b'|   < |z-a'| $, so that $s=-1$ and hence
$$
|z-a'|^{-2}  (2 -    \log \frac{  |a'-b'|    } {  |z-a'|   }  ) (z-a') = |b'-a'|^{-2} (b'-a') .
$$
Taking norms gives for the quantity $\gamma =  |a'-b'|   / |z-a'| <1$ that
$$
\gamma (2-\log\gamma)=1.
$$
The equation $x(2-\log x)=1$ has the unique solution $x\approx 0.317844$ on the interval $(0,1)$. 

Recall that now $b'$ is between $z$ and $a'$. Write $R=|z-b'|$ and $u=|a'-b'|$ so that $|z-a'|=R+u$. We  ask what happens if we interchange the roles of $a'$ and $b'$, as we may when looking for the infimum on the right hand side of  (\ref{met1}). 
Then
$$
 |z-a'| \left(  1 + \left| \log \frac{  |a'-b'|    } {  |z-a'|   }  \right|  \right)
 =(R+u) \left( 1 +  \left| \log \frac{  u}{R+u}  \right|   \right)
$$
is replaced by 
$$
 |z-b'| \left(  1 + \left| \log \frac{  |a'-b'|    } {  |z-b'|   }  \right|  \right)
 = R \left(  1 + \left| \log \frac{ u    } {  R   }  \right|  \right)  .
$$ 
We ask whether we get a smaller number in this way, that is, whether
$$
R \left(  1 + \left| \log \frac{ u    } {  R   }  \right|  \right) < 
(R+u)  \left(  1 + \left| \log \gamma  \right|  \right) .
$$
This is equivalent to
$$
1 + | \log (1/\gamma - 1) | < 
( 1 - \gamma )^{-1} 
 ( 1 + | \log \gamma | )  ,
$$
which a calculation shows to be valid, using the value 
$\gamma= 0.317844$. This shows that we do not get the infimum on the right hand side of (\ref{met1}) for the original $a'$ and $b'$. Thus, if $|z-a'|>|a'-b'|$ and $b'$ is given, we have $a' \in\partial D$ when the infimum is reached.

Combining the results above, we see that whenever $a',b'\in {\mathbb R}^n \setminus D$ are such that the infimum on the right hand side of (\ref{met1}) is reached (recognizing that such pairs $a',b'$ need not be unique), then $a', b' \in\partial D$ except possibly if $|z-a'| = |a'-b'|$. For if we assume that we know $a'$, we have seen that $b'\in \partial D$, and if we then assume that we know this $b'\in \partial D$, we have seen that $a'\in \partial D$. 

Suppose now that in the extremal situation we have  $|z-a'| = |a'-b'|$ and that $b'$ lies in the open set ${\mathbb R}^n \setminus \overline{D}$. Then we can follow an arc of a circle contained in an $(n-1)-$dimensional sphere, say $S$, centered at $a'$ and containing $b'$ until we reach a point $b''$ such that $| a' - b'' | = |a'-b'|$ (by definition) and $b''\in \partial D$ (if this is the case then indeed there would be extremal pairs $a',b'$ such that not both $a',b'$ lie on $\partial D$). For if there is no such point $b''$ then $S\subset  {\mathbb R}^n \setminus \overline{D}$, and hence a neighborhood of $S$, and therefore also a neighborhood of $b'$, lies in the open set ${\mathbb R}^n \setminus \overline{D}$. Thus, as seen before, we can move $b'$ slightly to make the quantity $ |z-a'| \left(  1 + \left| \log \frac{  |a'-b'|    } {  |z-a'|   }  \right|  \right) $ smaller, which shows that we were not in an extremal situation to begin with. It follows that when $|z-a'| = |a'-b'|$, it is possible to choose $b'$ so that $b'\in \partial D$. 

Suppose now that we are in an extremal situation with $|z-a'| = |a'-b'|$ and that $b'\in \partial D$. We keep $b'$ fixed. If $a' \in {\mathbb R}^n \setminus \overline{D}$, we   follow an arc of a circle contained in an $(n-1)-$dimensional sphere, say $S'$, centered at $z$ and containing $a'$ until we reach a point $a''$ such that $| a' - z | = |a''-z|$ (by definition) and $a''\in \partial D$. If this is not possible, then, as above, a neighborhood of $a'$ lies in the open set ${\mathbb R}^n \setminus \overline{D}$, and we can move $a'$ slightly to make the quantity $ |z-a'| \left(  1 + \left| \log \frac{  |a'-b'|    } {  |z-a'|   }  \right|  \right) $ smaller, so we were not in an extremal situation to begin with. The details of this argument are as follows. We look for a point $a''=a'+\alpha$, say, where $\alpha\in {\mathbb R}^n$ and $|\alpha|$ is small, such that
$$
 |z-a''| \left(  1 + \left| \log \frac{  |a''-b'|    } {  |z-a''|   }  \right|  \right) < |z-a'| .
$$
We ignore all terms that are $O(|\alpha|^2)$ as $\alpha \to 0$. 
Using the same notation as just before (\ref{del1}), now that $ \left| \log \frac{  |a'-b'|    } {  |z-a'|   }  \right| =0$,  (\ref{del1})  is replaced by
\begin{eqnarray} \label{del2}
\Delta &=& - \frac{ (z-a')\cdot \alpha } {  |z-a'| } 
+ |z-a'|  \left| \left(   \frac{ (z-a')\cdot \alpha } {  |z-a'|^2 } - \frac{ (b'-a')\cdot \alpha } {  |b'-a'|^2 }  \right) \right| 
\notag \\
&=& \alpha \cdot A_1 + |  \alpha \cdot A_2 |  ,
\end{eqnarray}
say.  Clearly $A_1\not= 0$. If $A_2$ is not a real multiple of $A_1$, we can choose $\alpha\not= 0$ so that $ \alpha \cdot A_2=0$ while $\alpha \cdot A_1 \not= 0$. Then, replacing $\alpha$ by $-\alpha$, if necessary, we can achieve $\Delta<0$, which shows that we do not have an infimum at the pair $a',b'$. Suppose then that $A_2$ is  a real multiple of $A_1$. Then $b'-a'$ is a real multiple of $z-a'$, so that $a'$, $b'$, and $z$ lie on the same straight line, and since  $|z-a'| = |a'-b'|$, we have $a'=(z+b')/2$. This must then be the situation for every pair $a',b'$ that minimizes
$$
 \left\{  |z-a| \left(  1 + \left| \log \frac{  |a-b|    } {  |z-a|   }  \right|  \right)  : a,b\in {\mathbb R}^n \setminus D      \right\}  
 $$
if there is more than one such pair.

This situation can occur as we have seen in the proof of Lemma~\ref{le5}. Let ${\bf e}$ be a unit vector in ${\mathbb R}^n$, where $n\geq 2$, and set $D= {\mathbb R}^n \setminus (\{2{\bf e} \} \cup \overline{B}({\bf e},\varepsilon))$, where $\varepsilon>0$ is very small. Suppose that $z=0\in D$. The above analysis can be used to show that to minimize the right hand side of (\ref{met1}), we must choose $b=2{\bf e} \in \partial D$ and $a={\bf e}$.  Here $a\in {\mathbb R}^n \setminus \overline{D}$, and we cannot minimize the  right hand side of (\ref{met1}) by using any $a\in \partial D$. 

Thus we have seen that whenever $a',b'\in {\mathbb R}^n \setminus D$ are such that the infimum on the right hand side of (\ref{met1}) is reached (recognizing that such pairs $a',b'$ need not be unique), then $a', b' \in\partial D$ except possibly if $|z-a'| = |a'-b'|$. Furthermore, we have seen that if $|z-a'| = |a'-b'|$, then $b'\in \partial D$, and also we may choose $a'\in \partial D$ (even though there may be possible choices for $a'$ outside $\overline{D}$ also) except possibly when $a'=(z+b')/2$ for every minimizing pair $a',b'$ (and of course the distance $|z-a'|$ must be the same for all such $a'$, even though this distance is then greater than $d(z,\partial D)$). We have seen that this last exceptional case may occur. When we are not in this exceptional case, we have $\lambda_D(z) = \lambda'_D(z)$, while in the exceptional case we have
$\lambda_D(z) > \lambda'_D(z)$. 

This proves Lemma~\ref{le7}.

\section{Proof of Lemma~\ref{lem2}.} \label{pf2}

Let the assumptions of Lemma~\ref{lem2}  be satisfied and suppose that $z\in D$. 
It is immediate from the definitions that 
$ \lambda''_D(z) \leq \lambda'_D(z) \leq \lambda_D(z)$. Thus we only need to prove that $\lambda_D(z) \leq C_0 \lambda''_D(z)$ for a positive absolute constant $C_0$.

We will need the following lemma.

\begin{lemma} \label{le1}
Suppose that $T>0$ and define 
\begin{equation} \label{ft0}
f(y) = y \left(  1 + \left|  \log \frac{T}{y}   \right|   \right) 
\end{equation}
for $y>0$. Then $f$ is a strictly increasing function of $y$ for $y>0$.
\end{lemma}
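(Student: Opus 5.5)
The function $f$ in (\ref{ft0}) is continuous on $(0,\infty)$. I will show it is strictly increasing on each of the two intervals $(0,T]$ and $[T,\infty)$, split at $y=T$, where the absolute value changes sign. Strict monotonicity on $(0,\infty)$ then follows by gluing the two intervals at the common point $y=T$.

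\emph{Case $y\ge T$.} Here $\left|\log (T/y)\right| = \log (y/T)$, so $f(y) = y(1+\log y - \log T)$. This is differentiable with
$$
f'(y) = 1+\log\frac{y}{T} + 1 = 2+\log\frac{y}{T} \ge 2 > 0 .
$$
Hence $f$ is strictly increasing on $[T,\infty)$.

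\emph{Case $0<y\le T$.} Here $\left|\log (T/y)\right| = \log (T/y)$, so $f(y) = y(1+\log T-\log y)$, and
$$
f'(y) = 1+\log T - \log y - 1 = \log\frac{T}{y} .
$$
This is strictly positive for $0<y<T$ and vanishes only at the endpoint $y=T$. By the mean value theorem, $f$ is strictly increasing on $(0,T]$.

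\emph{Gluing.} Take $0<y_1<y_2$. If both points lie in the same interval, $f(y_1)<f(y_2)$ by the case above. If $y_1<T<y_2$, then $f(y_1)<f(T)<f(y_2)$. This completes the proof.

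There is no real obstacle. The only point needing care is that the derivative vanishes at $y=T$, so positivity of $f'$ cannot be used there directly. This is handled by working on the closed intervals and applying the mean value theorem, which needs $f'>0$ only in the interior of each interval.
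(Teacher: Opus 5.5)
Your proof is correct and is essentially the paper's argument. You compute $f'(y)=\log(T/y)>0$ on $(0,T)$ and $f'(y)=2+\log(y/T)>0$ on $(T,\infty)$, then glue at $y=T$ using continuity; your explicit mean value theorem treatment of the two closed intervals just spells out the step the paper handles by noting that $f$ is continuous at $y=T$.
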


\medskip

{\bf Proof of Lemma~\ref{le1}. } 
For $0<y<T$, we have
$$
f(y) = y \left(  1 +  \log \frac{T}{y}    \right) 
$$
so that
$$
f'(y) = 1 +  \log \frac{T}{y}  -1 = \log \frac{T}{y} >0 .
$$
For $y>T$, we have
$$
f(y) = y \left(  1 -  \log \frac{T}{y}    \right) 
$$
so that
$$
f'(y) = 1 -  \log \frac{T}{y}  +1 = 2 + \log \frac{y}{ T } >0 .
$$
Since $f$ is continuous (in particular, at $y=T$), this proves Lemma~\ref{le1}.
$\Box$

We return to the proof of Lemma~\ref{lem2}. For a fixed $z\in D$, the inequality
$\lambda_D(z) \leq C_0 \lambda''_D(z)$ is equivalent to
\begin{eqnarray} \label{ine1}
&{}&
\inf\left\{  |z-a| \left(  1 + \left| \log \frac{  |a-b|    } {  |z-a|   }  \right|  \right)  : a,b\in \partial D, \,\, |z-a|=  d(z,\partial D)      \right\}   \notag
\\    & \leq & C_0 
\inf\left\{  |z-a| \left(  1 + \left| \log \frac{  |a-b|    } {  |z-a|   }  \right|  \right)  : a,b\in {\mathbb R}^n \setminus D      \right\}  .
\end{eqnarray} 

Choose $a,b\in \partial D$ such that $|z-a| =  d(z,\partial D) =: d>0$ and such that 
$$
\beta := \left|  \log \frac { |a-b|  }  {  d  }     \right|   \geq 0 
$$
is minimal (over all $a,b\in \partial D$ with $|z-a| =  d(z,\partial D)$), so that
\begin{eqnarray} \label{ine2}
&{}&
\inf\left\{  |z-a| \left(  1 + \left| \log \frac{  |a-b|    } {  |z-a|   }  \right|  \right)  : a,b\in \partial D, \,\, |z-a|=  d(z,\partial D)      \right\}   \notag
\\    & = &  d (1+\beta) .
\end{eqnarray} 

Next, choose $a',b'\in {\mathbb R}^n \setminus D$ such that with the notation
$|z-a'| =   d'\geq d$ and
$$
\beta' = \left|  \log \frac { |a'-b'|  }  {  d'  }     \right|   \geq 0 
$$
we have
\begin{eqnarray} \label{ine3}
&{}&
\inf  \left\{  |z-a| \left(  1 + \left| \log \frac{  |a-b|    } {  |z-a|   }  \right|  \right)  : a,b\in {\mathbb R}^n \setminus D      \right\}    \notag
\\    & = &  d' (1+\beta') .
\end{eqnarray} 

We need to prove that
\begin{equation} \label{ine4}
d (1+\beta) \leq C_0 d' (1+\beta') .
\end{equation} 

Suppose that it is known how $a'$ and $b'$ may be chosen, noting that the choice need not be unique, so we are considering any fixed possible choice. 
By Lemma~\ref{le7}, whenever $a',b'\in {\mathbb R}^n \setminus D$ are such that the infimum on the left hand side of (\ref{ine3}) is reached, $a', b' \in\partial D$ except possibly if $|z-a'| = |a'-b'|$. 
Further, if $|z-a'| = |a'-b'|$, then $b'\in \partial D$, and in addition, we may choose $a'\in \partial D$ (even though there may be possible choices for $a'$ outside $\overline{D}$ also) except possibly when $a'=(z+b')/2$ for every minimizing pair $a',b'$ (and  the distance $|z-a'|$ must be the same for all such $a'$, even though this distance is then greater than $d(z,\partial D)$). We have seen that this last exceptional case may occur. 

Suppose first that $d'=d$. Then we necessarily have $a'\in \partial D$ since then $|z-a'|=d'=d=d(z,\partial D)$.  Since $b'\in \partial D$, as we have seen, it follows that the two infima in (\ref{ine1})  are equal. Hence   (\ref{ine4})  holds with $C_0=1$. 

Suppose then that $d'>d$, hence $a'\not= a$. If $\beta=0$, then  (\ref{ine1}) holds with $C_0=1$. Thus we assume from now on that $\beta>0$. Note that $d$ is the same, no matter which possible point $a$ is chosen in case there is a choice for the minimum. 

Recall the notation $A(z,r_1,r_2) = \{ w\in {\mathbb R}^n \colon r_1<|w-z|<r_2    \}$. 
Since $\beta>0$, we have
\begin{equation}  \label{a1}
A(a,  e^{-\beta} d,  e^{\beta} d ) \cap \partial D  =\emptyset  .
\end{equation}
Since $z\in A(a,  e^{-\beta} d,  e^{\beta} d )$, it follows that
\begin{equation}  \label{a11}
A(a,  e^{-\beta} d,  e^{\beta} d ) \subset   D  .
\end{equation}
Also $B^n(z,d)\subset D$, and hence
\begin{equation}  \label{a2}
B^n(z,d) \cup A(a,  e^{-\beta} d,  e^{\beta} d ) \subset   D  .
\end{equation}

Now (\ref{ine1}) holds with $C_0=1$ if $d'\geq d (1 + \beta)$. Hence we will assume that
\begin{equation}  \label{d2}
d' < d (1 + \beta) .
\end{equation}

Recall the number $t_0\approx 
1.14619$
given by (\ref{t00}) and its properties.

We next consider two cases.

{\bf The case $\beta\geq t_0$.}  
If
\begin{equation}  \label{beta1-t0}
\beta\geq t_0 
\end{equation}
then
$$
e^{\beta}d -d \geq d(1+\beta) .
$$
Thus
$$
|z-a'| =d' < d(1+\beta) \leq e^{\beta}d -d
$$
so that
$$
|a-a'| \leq |z-a|+|z-a'|=d+d' < e^{\beta}d .
$$
Since $a'\notin A(a,  e^{-\beta} d,  e^{\beta} d )$ by (\ref{a2}), it follows that 
\begin{equation}  \label{a2small}
a'\in \overline{B}^n(a, e^{-\beta} d )\setminus  B^n(z,d)  .
\end{equation}

Next, still by (\ref{a2}),  because $b'\in \partial D$, we have either 
\begin{equation}  \label{b21}
b' \in \overline{B}^n(a, e^{-\beta} d )\setminus  B^n(z,d) 
\end{equation}
or 
\begin{equation}  \label{b22}
|b'-a|\geq e^{\beta}d .
\end{equation}

Suppose  that  (\ref{b21}) holds.  Note that $e^{\beta}>2$. Then 
$$
|a'-b'| \leq 2 e^{-\beta} d <  d \leq  d'
$$
so that
$$
\beta' = \log \frac { d' } { |b'-a'|  } \geq \log \frac{ d } { 2 e^{-\beta} d } 
= \beta - \log 2 .
$$
Hence
\begin{eqnarray} \label{d1d2t0}
\frac{ d (1+\beta) } { d' (1+\beta') }  & \leq &
 \frac{  1+\beta } { 1+\beta' } \leq \frac{  1+\beta } {  1+\beta - \log 2 }
 \\  
 & \leq &
\frac{  1+t_0 } {  1+ t_0 - \log 2 } \approx 1.47703  
<  1.48 . \notag
\end{eqnarray}
This implies (\ref{ine1}) with $C_0=1.48$.

We continue with the discussion of the case when (\ref{beta1-t0}) and (\ref{a2small}) hold. 
Suppose next that (\ref{b22}) holds. Then
\begin{eqnarray} \label{abd} 
|b'-a'|  & \geq &  |b'-a| - |a'-a| \geq e^{\beta}d - e^{-\beta}d
\\
& > &  ( e^{\beta} - 1) d \geq (1+\beta) d > d' . \notag
\end{eqnarray} 
Hence
$$
\beta' = \log \frac { |b'-a'| } { d'} .
$$

Consider $y=|b'-a'|>d'$ as a fixed number, and consider the function
\begin{equation} \label{f-inc}
f(t)=t (1+\log (y/t)) 
\end{equation}
for $d\leq t\leq d(1+\beta)$. This range contains the value $t=d'$, and we have $t<y$ for all these $t$. By Lemma~\ref{le1}, $f$ is strictly 
 increasing, and so $f(t)\geq f(d)
=d(1+\log (y/d))$ for all these $t$. In particular, 
$$
d' ( 1 + \beta') =f(d') \geq d(1+\log (y/d))  .
$$
Hence, (\ref{ine1}) holds with a certain $C_0\geq 1$ provided that
$$
C_0 (1+\log (y/d)) \geq 1+\beta ,
$$
which is equivalent to 
$$
C_0 \log (|b'-a'|/d) \geq \beta + 1 - C_0 ,
$$
that is,
\begin{equation} \label{b2a2}
|b'-a'| \geq d \exp\left( \frac {\beta + 1} {C_0}  - 1 \right)   .
\end{equation}
From (\ref{abd}), we see that $|b'-a'| \geq ( e^{\beta} - 1) d$, so that
(\ref{b2a2}) holds if 
$$
e^{\beta} - 1 \geq \exp\left( \frac {\beta + 1} {C_0}  - 1 \right)  ,
$$
that is,
$$
\frac {  \log ( e^{\beta} - 1  ) + 1 } { \beta + 1 }   \geq \frac{1}{C_0}  .
$$
Now $\beta\geq t_0$. For $t\geq t_0$, define
$$
f(t) =  \frac {  \log ( e^{t} - 1  ) + 1 } { t + 1 }   .
$$
Then
$$
f'(t) (e^t - 1) (t+1)^2 = t e^t + 1 - (e^t - 1) \log (e^t - 1) >0
$$
since $e^t - 1 <e^t$ and $ \log (e^t - 1) <\log e^t =t$. Thus for all $t\geq t_0$, we have (recall that $e^{t_0} = 2+t_0$)
$$
f(t)\geq f(t_0) =  \frac {  \log ( t_0 +1  ) + 1 } { t_0 + 1 } \approx 0.821778608 
$$
so that (\ref{ine4})  holds with
$$
C_0=1/f(t_0) \approx 1.21687
< 1.22  .
$$

This concludes the proof when (\ref{beta1-t0}) holds. We have now completed the case $\beta \geq t_0$. 

\bigskip

{\bf The case $0<\beta< t_0$.}  
Suppose then that 
\begin{equation}  \label{b23}
0< \beta < t_0   
\end{equation}
so that $e^{\beta} -1<1+\beta$. 
Then with $C_0=1+t_0<2.15$, we have (since $d\leq d'$)
$$
d(1+\beta)\leq C_0 d' \leq C_0 d' (1+\beta') 
$$
so that (\ref{ine1}) holds with $C_0=1+t_0$.

We have now completed the proof of (\ref{ine4}) and seen that the choice $C_0 =2.15$ works in all cases.

This completes the proof of Lemma~\ref{lem2}.

\section{Proof of Theorem~\ref{th-qc1}.} \label{s5}

Let the assumptions of Theorem~\ref{th-qc1} be satisfied and suppose first that $f$ is a $K-$quasiconformal homeomorphism of $D$ onto $D'$. By Lemma~\ref{lem2} we may use $\lambda''$ instead of $\lambda$.  We may perform the following normalizations. 

We fix $z\in D$ and assume that $z=f(z)=0$ and  $d(0,\partial D)=d(0,\partial D')=1$. Choose $a\in \partial D$ and $a'\in \partial D'$ so that $|a|=|a'|=1$. Write
$$
\beta = \min  \left\{     \left| \log   |a''-b|    \right|    : a'',b\in \partial  D , |a''| = 1    \right\} ,
$$
$$
\beta' = \min   \left\{   \left| \log   |a''-b|    \right|   : a'',b\in \partial  D' , |a''| = 1    \right\} ,
$$
so that
$$
 \frac{ 1}{\lambda''_D(z) } = 1 + \beta ,
$$
$$
 \frac{ 1}{\lambda''_{D'}(f(z)) } = 1 + \beta' .
$$
We need to prove that
\begin{equation} \label{b111} 
1+\beta \leq C_1 (1+\beta')
\end{equation}
for some constant $C_1>1$ depending on $n$ and $K$ only. Since $f^{-1}$ is $K-$quasiconformal, 
the same argument applied to $f^{-1}$ instead of $f$ then implies that 
$  1+\beta'  \leq C_1 (1+\beta)  $. 
This then proves 
Theorem~\ref{th-qc1}. Thus we proceed to prove (\ref{b111}).  

Define
\begin{equation} \label{c2}
C_2 =  K_I(f)^{1/(n - 1)} \log (80 \lambda_n^4)         
\end{equation}
where $\lambda_n \geq 4$ is a positive constant depending on $n$ only, to be mentioned in connection with (\ref{ll6}) below. 
If $\beta \leq C_2$, then (\ref{b111}) holds with $C_1=1+C_2$. Hence we may assume that $\beta >  C_2$.  

Recall the definition 
$
A(a,r_1,r_2) = \{ x\in {\mathbb R}^n : r_1< |x-a| < r_2 \} .
$

The definition of $\beta$ implies that 
$z=0 \in A(a,e^{-\beta}, e^{\beta}) \subset D$. Therefore the topological ring domain
$f(  A(a,e^{-\beta}, e^{\beta})     )$ containing $f(0)=0$ is a subset of $D'$. 

Choose $\beta_0>0$ so that $\beta_0<\beta$ but $\beta_0$ is very close to $\beta$. Define $A_0= A(a,e^{-\beta_0}, e^{\beta_0})$ so that the closure
$\overline{A_0}$ of $A_0$ is a compact subset of $D$. Thus $A_0' = f( \overline{A_0}   )$ is a compact subset of $D'$, and the open set ${\mathbb R}^n \setminus A_0'$ has exactly two components, an unbounded component $U_0$ and a bounded component $U_1$. Let us include the point at infinity in $U_0$ for the arguments that follow. Then $U_0$ is an open subset of $\overline{ {\mathbb R}^n  }$. 

For the definition and properties of the modulus $M(\Gamma)$ of a path family $\Gamma$ we refer to \cite{Vuo88}, Chapter~II.

Let $\Gamma$ be the family of all rectifiable paths in $A_0$ joining the two boundary components of $A_0$. Then the modulus $M(\Gamma)$ is given by (\cite{Vuo88}, p.~129) 
$$
M(\Gamma) = \frac{ \omega_{n-1} } { \left( \log \frac{e^{\beta_0} }{e^{-\beta_0}}  \right)^{n-1} } =   \frac{ \omega_{n-1} } { ( 2\beta_0 )^{n-1} } ,
$$
where $\omega_{n-1}$ is the $(n-1)-$dimensional measure of the unit sphere $S^{n-1}(0,1)$. Then by \cite{Vuo88}, p.~53, we have, writing $K_I=K_I(f)$, 
$$
M( f( \Gamma ) ) \leq K_I M(\Gamma) =    \frac{ K_I  \omega_{n-1} } { ( 2\beta_0 )^{n-1} } .  
$$

Let $x_0$ be an arbitrary point of $U_1$. Given $x_0$, define
$$
r_1 = \max \{ |y-x_0| \colon y \in \overline{U_1} \} 
$$
and
$$
r_2 = \min \{ |y-x_0| \colon y \in \overline{U_0} \} . 
$$
Note that we may have $r_2\leq r_1$. Choose a point $x_1\in  \overline{U_1}$ with $|x_1-x_0|=r_1$ and a point $x_2\in \overline{U_0}$ with $|x_2-x_0|=r_2$. Then $x_0,x_1\in  \overline{U_1}$ and $x_2, \infty\in  \overline{U_0}$. Now \cite{Vuo88},  Lemma~7.43, p.~93, implies that
$$
M( f( \Gamma ) ) \geq \tau_n\left(  \frac{ |x_2-x_0| }{ |x_1-x_0| }      \right) 
=  \tau_n\left(  \frac{r_2}{r_1}      \right) 
$$
for a positive decreasing function $\tau_n(t)$ defined for $t > 0$,  such that $\lim_{t\to \infty} \tau_n(t)=0$ and for all $s>1$ we have (\cite{Vuo88}, p.~90)
\begin{equation} \label{ll6}
( \log ( \lambda_n^2 s ) )^{1-n} \leq \tau_n(s-1)/ \omega_{n-1} \leq ( \log s )^{1-n}
\end{equation}
where $\lambda_n\in [4,2e^{n-1})$ is a constant depending only on $n$, defined in \cite{Vuo88}, pp.~88--89.

Thus
\begin{eqnarray*}
&{}&
\frac{ K_I  \omega_{n-1} } { ( 2\beta_0 )^{n-1} }  
\geq \tau_n\left(  \frac{r_2}{r_1}      \right) 
\geq  \omega_{n-1} ( \log (  \lambda_n^2 (1 + r_2/r_1 ) ) )^{1-n}  
\end{eqnarray*}
so that
\begin{eqnarray*}
&{}&
\log (  \lambda_n^2 (1 + r_2/r_1 ) ) \geq K_I^{1/(1-n)} 2\beta_0 .
\end{eqnarray*}
Hence
$$
 \frac{r_2}{r_1}    \geq -1 +  \lambda_n^{-2} e^{ K_I^{1/(1-n)} 2\beta_0 } . 
$$
If $  \beta\geq C_2$ and $\beta_0> \beta/2$, 
then 
$$ 
e^{ K_I^{1/(1-n)} 2\beta_0 } > e^{ K_I^{1/(1-n)} \beta } \geq 
e^{ C_2 K_I^{1/(1-n)}   } = 80 \lambda_n^4 
$$ 
so that
$ \lambda_n^{-2} e^{ K_I^{1/(1-n)} 2\beta_0 } > 80 \lambda_n^2 > 2$ and hence
\begin{equation} \label{t1}
 \frac{r_2}{r_1}    \geq  -1 +  \lambda_n^{-2} e^{ K_I^{1/(1-n)} 2\beta_0 } > (1/2)  \lambda_n^{-2} e^{ K_I^{1/(1-n)} 2\beta_0 } \geq 40 \lambda_n^2 > 40  .  
\end{equation} 
Since, in particular, $r_2 > 2 r_1$, we find that the  spherical ring domain 
$A(x_0,r_1,r_2)$ is contained in the domain between $U_0$ and $U_1$, that is, in 
$f(A_0)$. 

We  apply the same argument with $A_0$ replaced by
$A_1= A(a,1, e^{\beta_0})$. Since the bounded component of
${\mathbb R}^n \setminus f(\overline{A_1})$ contains $U_1$, we may use the same point $x_0$ and we find that with analogous notations, denoting the resulting $r_1$ and $r_2$ by $r_1'$ and $r_2'$.
Now
$$
 \lambda_n^{-2} e^{ K_I^{1/(1-n)} \beta_0 } 
 \geq \lambda_n^{-2} e^{ C_2 K_I^{1/(1-n)}/2 } 
 = \lambda_n^{-2} \sqrt{  80  \lambda_n^{4}  } = \sqrt{  80 } > 8 .
$$ 
So we have
\begin{equation} \label{t2}
 \frac{r_2'}{r_1'}    \geq  -1 + \lambda_n^{-2} e^{ K_I^{1/(1-n)} \beta_0  } 
 \geq (1/2)  \lambda_n^{-2} e^{ K_I^{1/(1-n)} \beta_0  }  > 4 .
\end{equation}
Hence the spherical ring domain 
$A_1'=A(x_0,r_1',r_2')$ is contained in $f(A_1)$.

We then apply the same argument with $A_0$ replaced by
$A_2=A(a,e^{-\beta_0}, 1 )$. We may again use the same point $x_0$ and we denote  the resulting $r_1$ and $r_2$ by $r_1''$ and $r_2''$.  We have
\begin{equation} \label{t3}
 \frac{r_2''}{r_1''}    \geq  (1/2)  \lambda_n^{-2} e^{ K_I^{1/(1-n)} \beta_0  } > 4 .
\end{equation}
It follows that
the  spherical ring domain 
$A_2'=A(x_0,r_1'',r_2'')$ is contained in $f(A_2)$.  Write
$$
\beta_1 =  K_I^{1/(1-n)} \beta_0 -\log (2 \lambda_n^2)  . 
$$

The same argument can be applied with any choice of $x_0\in U_1$. The individual radii such as $r_j$ will depend on $x_0$ but the inequalities (\ref{t1})--(\ref{t3}) remain valid. 

Let $\rho_1<\rho_2\leq \rho_3<\rho_4$ be the numbers $r_1',r_1'', r_2',r_2''$ listed in increasing order.
The spherical ring domains $A_1'$ and $A_2'$ are disjoint, have the same center $x_0$, and lie in the topological ring domain $A_0'$. Hence the spherical ring domain $A_3'= A(x_0,\rho_1,\rho_4)$ lies in $A_0'$. The point $f(z)=f(0)=0$ lies between $A_1'$ and $A_2'$. Hence
$|x_0-f(z)|=|x_0|  \geq \rho_2$. If $z_1$ is a boundary point of $D'$ in $U_0$, then 
$$
|z_1|=|f(z)-z_1| \geq \rho_4-\rho_3\geq  (e^{\beta_1} - 1 )   \rho_3 . 
$$

Recall that $a'\in \partial D'$ and $|a'|=d(f(z),\partial D') = d(0,\partial D')=1$. Suppose that $a'\in U_0$. Taking $z_1=a'$ and using the numbers $\rho_j$ for $1\leq j\leq 4$ arising from that choice, we get 
$(e^{\beta_1} - 1 )   \rho_3 \leq 1$. But there are points of $\partial D'$ in $U_1$ and the distance from $0$ to each of them is
$\leq 2 \rho_3 \leq 2/(  e^{\beta_1} - 1       ) <1$, a contradiction. Hence we cannot have $a'\in U_0$, so $a'\in U_1$. After noting that, we may apply the above arguments taking $x_0$ to be $a'$. The first argument on $A_0$ then shows that
$D'$ contains $A(a',r_1,r_2)$, and (\ref{t1}) holds. Since $\beta_0$ can be arbitrarily close to $\beta$, we see that we can choose $r_1$ and $r_2$ so that
$$
\frac{r_2}{r_1}    \geq -1 +  \lambda_n^{-2} e^{ K_I^{1/(1-n)} 2\beta } . 
$$

Suppose that $a'',b\in \partial D'$ and $|a''|=1$. Since $|a''|=1$, we must have $a''\in U_1$.  If $b\in U_1$ then
$|a''-b| \leq 2 \rho_1<2\rho_3< 2/(  e^{\beta_1} - 1       )<1$ so that
$$
| \log |a''-b| | > \log  ( (e^{\beta_1} - 1 ) /2 )  .  
$$
Suppose that $b\in U_0$. Note that
$1=|a'-f(z)| \leq \rho_3$.  Then 
$$ 
|a''-b| \geq |b| - |a''| \geq  \rho_4-\rho_3 -1 \geq   (e^{\beta_1} - 1 ) \rho_3 -1
\geq   e^{\beta_1} - 2   
$$  
since $|b|=|b-f(z)| \geq  \rho_4-\rho_3$. Hence
$$
| \log |a''-b| | = \log |a''-b|  > \log   (e^{\beta_1} - 2 )   .  
$$
Now
$$
\frac{ e^{ K_I^{1/(1-n)} \beta } } {  4  \lambda_n^2 } 
\geq \frac{ e^{ K_I^{1/(1-n)} C_2 } } {  4  \lambda_n^2 } 
\geq \frac{  80 \lambda_n^4  } {  4  \lambda_n^2 } = 20  \lambda_n^2 > 2
$$
so that
$$
 \frac{    e^{ K_I^{1/(1-n)} \beta } /(2  \lambda_n^2 ) -  2  } {    2    }  = 
-1 +  \frac{    e^{ K_I^{1/(1-n)} \beta }    } {   4  \lambda_n^2    }
\geq    \frac{    e^{ K_I^{1/(1-n)} \beta }    } {   8 \lambda_n^2    }  .
$$
It follows that
$$
\beta' \geq \log  ( (e^{\beta_1} - 2 ) /2 )
= \log \frac{    e^{ K_I^{1/(1-n)} \beta } /(2  \lambda_n^2 ) -  2  } {    2    }  
\geq K_I^{1/(1-n)} \beta - \log (8 \lambda_n^2 ) .  
$$

Hence for $\beta> C_2$ we get
\begin{eqnarray*}
&{}&
\frac{ 1 + \beta } { 1 + \beta' } \leq
\frac{ 1 + \beta } { 1 + K_I^{1/(1-n)} \beta - \log (8  \lambda_n^2 )}
\\ &{}&
=  K_I^{1/(n-1)} +  K_I^{1/(n-1)} \frac{ 1 +  K_I^{1/(n-1)} (  \log (8  \lambda_n^2 )  - 1 )    } { \beta + K_I^{1/(n-1)} (  1  - \log (8  \lambda_n^2 )    ) }
\end{eqnarray*}
so that we obtain (\ref{b111}) with 
 \begin{equation} \label{bbb}
C_1 = 
 K_I^{1/(n-1)} +  K_I^{1/(n-1)} \frac{ 1 +  K_I^{1/(n-1)} (  \log (8  \lambda_n^2 )  - 1 )    } { C_2 + K_I^{1/(n-1)} (  1  - \log (8  \lambda_n^2 )    ) } .
\end{equation}
Choosing $C_2$ as in (\ref{c2}) we get
\begin{eqnarray*}
&{}&
C_1 = 
 K_I^{1/(n-1)} +  \frac{ 1 +  K_I^{1/(n-1)} (  \log (8  \lambda_n^2 )  - 1 )    } {  \log (80  \lambda_n^4 )  +  (  1  - \log (8  \lambda_n^2 )    ) } 
\\ &{}&
= K_I^{1/(n-1)} +  \frac{ 1 +  K_I^{1/(n-1)} (  \log (8  \lambda_n^2 )  - 1 )    } {   1  + \log 10 + 2 \log \lambda_n     } 
\end{eqnarray*}
To get an expression depending only on $K$ rather than $K_I$, so that the same expression is valid also for $f^{-1}$, we may replace $K_I$ by $K$ and set
\begin{equation*}
C_1  
= K^{1/(n-1)} +  \frac{ 1 +  K^{1/(n-1)} (  \log (8  \lambda_n^2 )  - 1 )    } {   1  + \log 10 + 2 \log \lambda_n     } .
\end{equation*}

This proves Theorem~\ref{th-qc1}. 

{\bf Remark.} We see from (\ref{bbb}) that by making a sufficiently large choice for $C_2$, depending only on $n$ and $K$, we can get $C_1$ (for the smaller range $\beta> C_2$) to be arbitrarily close to 
$K_I^{1/(n-1)}$. The trivial bound $C_1=C_2+1$ for $\beta \leq C_2$ is crude but we do not try to get a smaller bound here. On the basis of general principles, the proper order of magnitude for $C_1$ should be $K_I^{1/(n-1)}$.

\section{Proof of Theorem~\ref{th-qc2}.} \label{s6}

\subsection{A lemma.} The following auxiliary result shows that the $\lambda-$metrics do not vary too much in certain balls. 

\begin{lemma} \label{le8}
Let $D$ be a domain in ${\mathbb R}^n$ with at least two boundary points. Suppose that $x_0\in D$, that $0<t<1$, and that  $y\in B^n(x_0, t d(x_0,\partial D) )$ (so that $y\in D$). Then
\begin{equation} \label{ratio}
\frac{1}{C'} \leq   \frac{ \lambda'_D(x_0)    } {    \lambda'_D(y)    }    \leq C'  
\end{equation}
where
\begin{equation} \label{c'}
 C'  =   \frac{  1 + \log \frac{  1  } {  1-t     }    } {  1-t     }    .     
\end{equation}
\end{lemma}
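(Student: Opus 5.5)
The plan is to compare the two infima in (\ref{met2}) directly. The minimizing pair for one of the two points is used as a competitor for the other. Throughout, write $d=d(x_0,\partial D)$ and $L=\log\frac{1}{1-t}$, so that $C'=(1+L)/(1-t)$. By Lemma~\ref{lem1} the infimum defining $1/\lambda'_D$ is attained at each point.

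\textbf{The key elementary estimate.} For $a,b\in\partial D$ and points $u,v\in D$, set $s=|v-a|/|u-a|$. Then
$$
|v-a|\Bigl(1+\Bigl|\log\tfrac{|a-b|}{|v-a|}\Bigr|\Bigr)
\le s\,|u-a|\Bigl(1+\Bigl|\log\tfrac{|a-b|}{|u-a|}\Bigr|+|\log s|\Bigr)
\le s\,(1+|\log s|)\,|u-a|\Bigl(1+\Bigl|\log\tfrac{|a-b|}{|u-a|}\Bigr|\Bigr).
$$
The first inequality is the triangle inequality for the logarithm. The second uses $1+\beta+\ell\le(1+\ell)(1+\beta)$ for $\beta,\ell\ge 0$. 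So it suffices to show that $s\le 1/(1-t)$ and $|\log s|\le L$ in each direction.

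\textbf{Step 1: $1/\lambda'_D(y)\le C'/\lambda'_D(x_0)$.} Take a minimizing pair $a,b\in\partial D$ for $x_0$, and apply the estimate with $u=x_0$, $v=y$. Since $|x_0-a|\ge d$ and $|y-x_0|<td$, we get
$$
(1-t)|x_0-a|\le|y-a|\le(1+t)|x_0-a|,
$$
so $s\in[1-t,1+t]$. Because $\log(1+t)\le\log\frac{1}{1-t}$, this gives $|\log s|\le L$. Because $1+t\le 1/(1-t)$, it gives $s\le 1/(1-t)$. Hence $1/\lambda'_D(y)\le C'/\lambda'_D(x_0)$.

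\textbf{Step 2: $1/\lambda'_D(x_0)\le C'/\lambda'_D(y)$.} Take a minimizing pair $a,b\in\partial D$ for $y$, and apply the estimate with $u=y$, $v=x_0$. The upper bound on $s$ uses $|y-a|\ge d(y,\partial D)\ge(1-t)d$. Then
$$
|x_0-a|\le|y-a|+td\le|y-a|+\frac{t}{1-t}|y-a|=\frac{|y-a|}{1-t}.
$$
For the lower bound, $|x_0-a|\ge d$ gives $|y-a|\le|x_0-a|+td\le(1+t)|x_0-a|$. So $s\ge 1/(1+t)\ge 1-t$. Thus again $s\in[1-t,1/(1-t)]$, so $|\log s|\le L$ and $s\le 1/(1-t)$. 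Hence $1/\lambda'_D(x_0)\le C'/\lambda'_D(y)$.

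Combining the two steps yields (\ref{ratio}). The only delicate point is Step~2, where the new base point $y$ is not the centre of the ball. The needed lower bound $|y-a|\ge(1-t)d$ comes from $d(y,\partial D)\ge d-|y-x_0|$, and it is here that $a\in\partial D$ is used. The same argument therefore works for $\lambda''_D$, but for $\lambda_D$ it would need Lemma~\ref{le7} to place $a$ on $\partial D$.
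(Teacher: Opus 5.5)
Your proof is correct and is essentially the paper's argument: the minimizing pair for one point is used as a competitor for the other, and in both directions the ratio $s$ lies in $[1-t,1/(1-t)]$, which bounds the distortion factor $s(1+|\log s|)$ by $C'$. One correction to your closing aside, which the lemma does not need: the argument does not transfer to $\lambda''_D$, because a pair with $|x_0-a|=d(x_0,\partial D)$ is in general not admissible at $y$. For example, for $D={\mathbb R}^2\setminus\{{\bf e}_1,-{\bf e}_1,-2{\bf e}_1\}$, $x_0=0$, $y=\delta{\bf e}_1$ one finds $\lambda''_D(x_0)/\lambda''_D(y)\to 1+\log 2$ as $\delta\to 0$, so the constant $C'$ fails there for small $t$. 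By contrast, for $\lambda_D$ it transfers without Lemma~\ref{le7}, since every $a\in{\mathbb R}^n\setminus D$ already satisfies $|x_0-a|\ge d(x_0,\partial D)$ and $|y-a|\ge d(y,\partial D)$.
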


Due to Lemma~\ref{lem2}, similar results are valid for $\lambda_D$ and $\lambda''_D$ instead of $\lambda'_D$.  

\subsection{\bf Proof of Lemma~\ref{le8}.} 
Suppose that $y\in B^n(x_0, t d(x_0,\partial D) )$.  If $x' \in \partial D$ and $|x'-x_0|= d(x_0,\partial D)$, then $d(y,\partial D) \leq |x'-y| \leq |x'-x_0|+|x_0-y| \leq (1+t) d(x_0,\partial D)$. Similarly, if $x'' \in \partial D$ and $|x''-y|= d(y,\partial D)$, then $d(x_0,\partial D) \leq |x''-x_0| \leq |x''-y|+|y-x_0| \leq d(y,\partial D) + t d(x_0,\partial D)$ so that $d(x_0,\partial D)\leq d(y,\partial D)/(1-t)$. 

We will consider the ratio  $ \lambda'_D(x_0)    /    \lambda'_D(y)     $.
By Lemma \ref{lem1} there are minimizing points $a,b\in \partial D$ for  
$ \lambda'_D(x_0) $, and there are minimizing points $a' ,b' \in \partial D$ for  
$ \lambda'_D(y) $. By (\ref{met2}) we have
\begin{equation} \label{min1}
 \frac{ 1}{\lambda'_D(x_0) } =  |x_0-a| \left(  1 + \left| \log \frac{  |a-b|    } {  |x_0-a|   }  \right|  \right)   ,
\end{equation} 
\begin{equation} \label{min2}
 \frac{ 1}{\lambda'_D(y) } \leq  |y -a| \left(  1 + \left| \log \frac{  |a-b|    } {  |y-a|   }  \right|  \right)   ,
\end{equation} 
\begin{equation} \label{min3}
 \frac{ 1}{\lambda'_D(y) } =  |y-a' | \left(  1 + \left| \log \frac{  |a' -b' |    } {  |y-a' |   }  \right|  \right)   ,
\end{equation} 
\begin{equation} \label{min4}
 \frac{ 1}{\lambda'_D(x_0) } \leq  |x_0-a' | \left(  1 + \left| \log \frac{  |a' -b' |    } {  |x_0-a' |   }  \right|  \right)   .
\end{equation} 

We naturally have
$$\min \{ |y-a|, |y-a'| \}  \geq d(y,\partial D)$$ and $$ \min \{|x_0-a|,  |x_0 -a' |\}  \geq d(x_0,\partial D) . $$ 

We have
$$
|y -a| \leq |y-x_0| + |x_0-a| \leq t d(x_0,\partial D) + |x_0-a| \leq (1+t) |x_0-a| 
$$
and
$$
|y -a| \geq |x_0-a| - |x_0-y| \geq |x_0-a| -  t d(x_0,\partial D) 
\geq (1-t)  |x_0-a| . 
$$
Hence
$$
(1-t)  \frac{  |a-b|    } {   |x_0-a|   } \leq  \frac{  |a-b|    } { (1+t) |x_0-a|   } \leq  \frac{  |a-b|    } {  |y-a|   } \leq  
 \frac{  |a-b|    } { (1-t)  |x_0-a|   } .
$$
It follows that
$$
      \left| \log \frac{  |a-b|    } {  |y-a|   }  \right|  \leq \left| \log \frac{  |a-b|    } {  |x_0-a|   }  \right| + \log \frac{1}{1-t} . 
$$
Thus
$$
 1 + \left| \log \frac{  |a-b|    } {  |y-a|   }  \right| \leq \left( 1 +  \log \frac{1}{1-t} \right) 
\left(  1 + \left| \log \frac{  |a-b|    } {  |x_0-a|   }  \right|  \right) . 
$$
Finally,
\begin{eqnarray} \label{min5}
&{}&
\frac{ 1}{\lambda'_D(y) } \leq (1+t) |x_0-a|
\left( 1 +  \log \frac{1}{1-t} \right) 
\left(  1 + \left| \log \frac{  |a-b|    } {  |x_0-a|   }  \right|  \right) \notag
\\ &{}& 
= (1+t) \left( 1 +  \log \frac{1}{1-t} \right) \frac{ 1}{\lambda'_D(x_0) } 
\notag
\\ &{}& 
\leq (1-t)^{-1}  \left( 1 +  \log \frac{1}{1-t} \right) \frac{ 1}{\lambda'_D(x_0) }  .
\end{eqnarray} 

An analogous argument shows first that
\begin{eqnarray} \label{min5a}
&{}&
|x_0 -a'| \leq |y-x_0| + |y-a' | \leq t d(x_0,\partial D) + |y-a' | \notag
\\ &{}& 
\leq 
t (1-t)^{-1} d(y,\partial D) + |y-a' | \notag
\\ &{}& 
\leq t (1-t)^{-1} |y-a' | + |y-a' |
= (1-t)^{-1} |y-a' | .
\end{eqnarray}
Next,
\begin{eqnarray} \label{min5b}
&{}&
|y-a' | \leq |y-x_0| + |x_0-a'| \leq t d(x_0,\partial D) + |x_0-a'|  \notag
\\ &{}&
\leq t |x_0-a'| + |x_0-a'| = (1+t) |x_0-a'| 
\end{eqnarray}
so that
$$
 |x_0-a'| \geq  (1+t)^{-1}  |y-a' | .
$$

Hence
$$
(1-t) \frac{  |a' -b' |    } {  | y -a'|   }  \leq 
\frac{  |a' -b' |    } {  |x_0 -a' |   }  \leq  
(1+t) \frac{  |a' -b' |    } {  |y-a' |   } 
 \leq  
(1-t)^{-1} \frac{  |a' -b' |    } {  |y-a' |   }  .
$$
It follows that
$$
\left| \log \frac{  |a' -b' |   } {  |x_0 -a'|   }  \right|  \leq \left| \log \frac{  |a' -b' |    } {  |y-a' |   }   \right| + \log \frac{1}{1-t} . 
$$
Thus
$$
 1 + \left| \log \frac{ |a' -b' |  } {  |x_0 -a'|   }  \right| \leq \left( 1 +  \log \frac{1}{1-t} \right) 
\left(  1 + \left| \log \frac{  |a' -b' |    } {  |y-a' |   }   \right|  \right) . 
$$
Finally,
\begin{eqnarray} \label{min6}
&{}&
\frac{ 1}{\lambda'_D(x_0) } \leq (1-t)^{-1}  | y -a' |
\left( 1 +  \log \frac{1}{1-t} \right) 
\left(  1 + \left| \log \frac{  |a' -b' |    } {  | y -a' |   }  \right|  \right) \notag
\\ &{}& 
= (1-t)^{-1}  \left( 1 +  \log \frac{1}{1-t} \right) \frac{ 1}{\lambda'_D(y) } .
\end{eqnarray} 

Combining these results we obtain (\ref{ratio}) with $C'$ given by (\ref{c'}). This proves Lemma~\ref{le8}.

\subsection{\bf Proof of Theorem~\ref{th-qc2}.}

Let the assumptions of  Theorem~\ref{th-qc2} be satisfied. 
We fix $x_0\in D$ and consider an arbitrary $y\in D$ to estimate $d'_D(f(x_0),f(y))$.
 Because of Lemma~\ref{lem2}, we may consider $d'_D$ instead of~$d_D$.
 
So let $x_0\in D$ be chosen. By Theorem~12.17, p.~159 in \cite{Vuo88} (taking $p=1$ there), there is $\theta'\in (0,1)$ depending on $n$ and $K$ only such that for all $y\in B^n(x_0, (1/2) d(x_0,\partial D) )$ we have 
$d(f(y), \partial D') \geq \theta' d(f(x_0),\partial D')$. (This can be applied to $f^{-1}$ also since if $f$ is $K-$quasi\-conformal, then so is $f^{-1}$.) 
Then, by Lemma~12.2 and Corollary 12.3 on page 154 in \cite{Vuo88}, there is $\theta\in (0,1/2)$ depending on $n$ and $K$ only such that for all $y\in B^n(x_0, \theta d(x_0,\partial D) )$ we have $|f(y)-f(x_0)|\leq (1/2) d(f(x_0),\partial D')$. 
Note that when verifying that the assumptions of \cite{Vuo88},  Lemma~12.2 are satisfied, we use the fact that in the notation of that lemma, we have 
$|f(y)-z| \geq d(f(y),\partial D')$. We will use this $\theta$ below.

In addition, we note that the choice of the number $1/2$ in the expression $(1/2) d(x_0,\partial D)$ in  \cite{Vuo88}, Lemma~12.2 and Corollary 12.3, is arbitrary and the method of proof immediately gives also the following result. 
Suppose that $t_0\in (0,1/2]$. Then there is $t_1\in (0,1/2]$ such that for all
$y$ with $|y-x_0|\leq t_1 d(x_0, \partial D)$ we have
$|f(y)-f(x_0)| \leq t_0 d(f(x_0), \partial D')$. Here $t_1$ depends only on $t_0$, $n$, and $K$. For small $t_0$, the rough order of magnitude of $t_1$ is $t_0^K$.  If $f$ is $K-$quasiconformal, then so is $f^{-1}$, so that this result can be applied to $f^{-1}$ to conclude that if $|f(y)-f(x_0)|\leq t_1 d(f(x_0), \partial D')$ then
$|y-x_0|\leq t_0 d(x_0, \partial D)$.

Choose $t_0=1/2$ and let $t_1$ correspond to this $t_0$. After that, use this $t_1$ in place of $t_0$ and find the corresponding $t_1$ and denote it by $t_2$. Write $\theta_1=\min\{t_2,\theta\} \in (0,1/2)$. Suppose first that $y\in \overline{B}^n(x_0, \theta_1 d(x_0,\partial D) ) $. Let $L$ be the line segment joining $f(x_0)$ to $f(y)$. By the above, $$f(y) \in B^n(f(x_0), t_1 d(f(x_0),\partial D') )$$ and hence $L \subset B^n(f(x_0), t_1 d(f(x_0),\partial D') ) $. For an arbitrary $z\in L$, write $z=f(w)$ ($w=f^{-1}(z)$ is unique since $f$ is a homeomorphism) and apply the above comments about $t_0$ and $t_1$ to $f^{-1}$
to conclude that $|w-x_0|\leq t_0 d(x_0, \partial D) = (1/2) d(x_0, \partial D) $ 
so that also $d(w, \partial D)  \leq (3/2) d(x_0, \partial D) $.
By Lemma~\ref{le8} we thus have
$$
1/C' \leq    \lambda'_D(w)/\lambda'_D(x_0) \leq C'
$$
where $C' = 2(1+\log 2)$ is as in (\ref{ratio}), taking $t=1/2$ in (\ref{ratio}). 

Using Theorem~\ref{th-qc1} (choosing an absolute constant $C_1$ that works for $\lambda'$)  we see that
$$
 \lambda'_{D'}(z) d(z,\partial D') \leq C_1  \lambda'_{D}(w) d(w,\partial D) 
$$
and so
\begin{eqnarray*}
&{}& d'(f(x_0),f(y)) \leq \int_{L} \lambda'_{D'}(z) |dz| 
= \int_{L} \frac{  \lambda'_{D'}(z) d(z,\partial D')  } {   d(z,\partial D')    } |dz| 
\\ &{}&
\leq C_1  \int_{L} \frac{  \lambda'_{D}(w) d(w,\partial D)  } {   d(z,\partial D')    } |dz| .
\end{eqnarray*}
We further have
$ \lambda'_{D}(w) d(w,\partial D) \leq (3/2) C' \lambda'_{D}(x_0) d(x_0,\partial D)$ for all these $w$, as well as
$d(z,\partial D') \geq (1/2)  d(f(x_0) ,\partial D') $ for all these $z$. Hence
\begin{eqnarray*}
&{}&
C_1  \int_{L} \frac{  \lambda'_{D}(w) d(w,\partial D)  } {   d(z,\partial D')    } |dz| 
\leq \frac{ 3 C_1 C' }{2} \frac{ 2 \lambda'_{D}(x_0) d(x_0,\partial D)  } {   d(f(x_0) ,\partial D')    } |f(y)-f(x_0)| . 
\end{eqnarray*}
By \cite{Vuo88}, (12.8), p.~156, we have
$$
\frac{ 2   |f(y)-f(x_0)|  } {   d(f(x_0) ,\partial D')    } 
\leq   2^{1-1/K} K  \left(  \frac{ |x_0-y|    } {  \theta_1   d(x_0,\partial D)   }         \right)^{\alpha}  
$$
where
$\alpha= K^{1/(1-n)}$.  
Thus
$$
d'(f(x_0),f(y)) \leq \frac{ 3 C_1 C' }{2}  \lambda'_{D}(x_0) d(x_0,\partial D)  2^{1-1/K} K  \left(  \frac{ |x_0-y|    } {  \theta_1   d(x_0,\partial D)   } \right)^{\alpha}  . 
$$

Pick $\varepsilon >0$. There is a rectifiable path $\gamma$ joining $x_0$ to $y$ in $D$ such that $\int_{\gamma} \lambda'_D(u) |du| < d'_D(x_0,y) + \varepsilon$. If all points of $\gamma$ lie in $\overline{B}^n (x_0, (1/2) d(x_0,\partial D) )$, set $x_1=y$. If not, let $x_1$ be the first point of $\gamma$ on $S^{n-1} (x_0, (1/2) d(x_0,\partial D) )$ encountered when tracing $\gamma$ from $x_0$ to $y$. Let $\gamma'$ be the part of $\gamma$ from $x_0$ to $x_1$. If $x_1=y$, we have
$\int_{\gamma'}  |du| \geq |x_0-y|$. Otherwise, we have
$\int_{\gamma'}  |du| \geq (1/2) d(x_0,\partial D) \geq |x_0-y|$. 
By Lemma~\ref{le8}, at all points $u$ of $\gamma'$ we have
$\lambda'_D(u) \geq \lambda'_D(x_0)/C'$ where $C' = 2(1+\log 2 )$. 
Hence
$$
d'_D(x_0,y) + \varepsilon > \int_{\gamma'} \lambda'_D(u) |du|
\geq (C')^{-1} \lambda'_D(x_0) |x_0-y|  .
$$ 
Since $\varepsilon >0$ is arbitrary, we get
\begin{equation} \label{ddd}
d'_D(x_0,y) \geq (C')^{-1} \lambda'_D(x_0) |x_0-y| .
\end{equation}  
This implies that
$$
\left(    \frac{  |x_0-y|  } { d(x_0,\partial D)  }  \right)^{\alpha} \leq 
\left(    \frac{  C' d'_D(x_0,y) } {  \lambda'_D(x_0) d(x_0,\partial D)  }  \right)^{\alpha}
$$
so that
$$
d'(f(x_0),f(y)) \leq \frac{ 3 \cdot 2^{-1/K} K C_1 C' }{  \theta_1^{\alpha} }   \lambda'_{D}(x_0) d(x_0,\partial D)   \left(    \frac{  C' d'_D(x_0,y) } {  \lambda'_D(x_0) d(x_0,\partial D)  }  \right)^{\alpha} . 
$$
The definition of $\lambda''_D(x_0)$ shows that
$ \lambda''_D(x_0) d(x_0,\partial D)  \leq 1$. With $C_0$ as in Lemma~\ref{lem2}, we thus have  $ \lambda'_D(x_0) d(x_0,\partial D)  \leq C_0$.   Hence
\begin{eqnarray*} 
&{}&
d'(f(x_0),f(y)) \leq \frac{ 3 \cdot 2^{-1/K}  K C_1 (C')^{1+\alpha} }{  \theta_1^{\alpha} }   ( \lambda'_{D}(x_0) d(x_0,\partial D) )^{1-\alpha}  d'_D(x_0,y)^{\alpha} 
\\ &{}&
\leq \frac{ 3 \cdot 2^{-1/K}  K C_1 (C' )^{1+\alpha} C_0^{1 - \alpha}  }{  \theta_1^{\alpha} }     d'_D(x_0,y)^{\alpha}  . 
\end{eqnarray*}
This gives (\ref{qc2-1}) for $d'_D$ with $C_2 =  3 \cdot 2^{-1/K}  K C_1 (C' )^{1+\alpha} C_0^{1 - \alpha}  \theta_1^{-\alpha} $.

Next, we follow the general ideas in the proof of Theorem~12.5 in \cite{Vuo88}, p.~155, with certain necessary changes since we are dealing with a different metric than the one  in \cite{Vuo88}.  

We have covered the case when $y\in B^n(x_0, \theta_1 d(x_0,\partial D) ) $. Suppose now that $y\notin B^n(x_0, \theta_1 d(x_0,\partial D) ) $. Choose a small $\varepsilon>0$.  There is a rectifiable closed Jordan arc $\gamma$ in $D$ joining $x_0$ to $y$ such that the length of $\gamma$ in the $\lambda'_D$-metric is less than $d'_D(x_0,y) + \varepsilon$. First find points $x_j$ for $1\leq j\leq p+1$ as follows. 
Suppose that $x_0, \dots , x_j$ have been chosen. If the subarc of $\gamma$ from $x_j$ to $y$ is contained in $\overline{B}^n(x_j, \theta_1 d(x_j,\partial D) ) $, set $p=j$, $x_{p+1}=y$ and stop. If not, let $x_{j+1}$ be the first  point  of $\gamma$
 on $S^{n-1}(x_j, \theta_1 d(x_j,\partial D) )   $ when we traverse $\gamma$ from $x_j$ to $y$. 
Since $\gamma$ is a compact subset of $D$, there is $A>0$ such that $d(x,\partial D)\geq A$ for all $x$ on $\gamma$. Hence we always go forward by at least a fixed Euclidean length of subarcs of $\gamma$, and so the process stops after finitely many steps.  It follows from 
the argument we used above to obtain (\ref{ddd})  that
$$
d'_D(x_j,x_{j+1} ) \geq (C')^{-1} \lambda'_D(x_j) \theta_1 d(x_j,\partial D)  .  
$$
Next, by Theorem~\ref{th-qc1}, we have
$$
\lambda'_{D'}(f(x_j) )  d(f(x_j),\partial D')  \leq 
C_1 \lambda'_D(x_j)  d(x_j,\partial D)  .  
$$
Hence
$$
d'_D(x_j,x_{j+1} ) \geq (C_1 C')^{-1} \lambda'_{D'}(f(x_j) ) \theta_1 d(f(x_j),\partial D')  . 
$$
For all $x\in \gamma$ between $x_j$ and $x_{j+1}$, we have
$$
|x-x_j| \leq \theta_1 d(x_j,\partial D)
$$
so that by the definition of $\theta_1$,
$$
|f(x)-f(x_j)| \leq t_1 d(f(x_j),\partial D')  \leq (1/2) d(f(x_j),\partial D') 
$$
and so
$$
d(f(x),\partial D') \geq (1/2) d(f(x_j),\partial D') .
$$
Now by Lemma~\ref{le8} we have
$$
 \lambda'_{D'}(f(x_j) ) \geq (1/C')  \lambda'_{D'}(f(x) ) . 
$$
Let $L$ be the line segment between $f(x_j)$ and $f(x_{j+1})$, hence contained in $\overline{  B^n}( f( x_j ) , (1/2) d(f(x_j),\partial D') )   $.   Then by Lemma~\ref{le8}
\begin{eqnarray*}
&{}&
d'_{D'}( f(x_j),f(x_{j+1}) ) \leq \int_{L} \lambda'_{D'}(z ) |dz| \leq 
  C'    \lambda'_{D'}(f(x_j) )  |   f(x_j) - f(x_{j+1})   | .
\end{eqnarray*}
Combining the above inequalities we obtain
$$
d'_{D'}( f(x_j),f(x_{j+1}) ) \leq
\frac{ (C')^2 C_1 } { 2 \theta_1 }    d'_D(x_j,x_{j+1} ) .
$$
Thus
\begin{eqnarray*}
&{}&
d'_{D'}( f(x_0),f(y) ) \leq \sum_{j=0}^p d'_{D'}( f(x_j),f(x_{j+1}) )
\\ &{}&
 \leq 
 \frac{ (C')^2 C_1 } { 2 \theta_1 }  \sum_{j=0}^p  d'_D(x_j,x_{j+1} )
 \leq  \frac{ (C')^2 C_1 } { 2 \theta_1 }   ( d'_D(x_0, y  ) + \varepsilon )  .  
\end{eqnarray*}
Since $ \varepsilon >0$ is arbitrary, we have
$$
d'_{D'}( f(x_0),f(y) ) \leq \frac{ (C')^2 C_1 } { 2 \theta_1 }    d'_D(x_0, y  ) .  
$$
This gives (\ref{qc2-1}) with $C_2= ((C')^2 C_1)/( 2 \theta_1 ) $ in the case when
$y\notin B^n(x_0, \theta_1 d(x_0,\partial D) ) $. The proof of  Theorem~\ref{th-qc2} is complete.

\end{document}